\documentclass{article}
\usepackage{amsmath,amssymb,amsfonts,amsthm,mathrsfs}

\usepackage{xcolor}
\usepackage{graphicx} 
\usepackage{mathtools} 
\usepackage[margin=20pt]{caption} 

\usepackage[style=alphabetic,sorting=nyt,maxbibnames=7,maxcitenames=5, backend=biber, style=alphabetic]{biblatex}
\usepackage{thmtools, thm-restate} 
\usepackage{hyperref} 
\usepackage{cleveref} 

\newtheorem{thm}{Theorem}[section]

\newtheorem{prop}[thm]{Proposition}
\newtheorem{lem}[thm]{Lemma}
\newtheorem{cor}[thm]{Corollary}

\theoremstyle{definition}
\newtheorem{dfn}[thm]{Definition}
\newtheorem{rem}[thm]{Remark}
\AtEndEnvironment{rem}{\null\hfill\ensuremath{\triangle}}

\newtheorem{mainthm}{Theorem}

\newtheorem{maincor}[mainthm]{Corollary}

\newcommand{\NN}{\mathbb N}

\newcommand{\ZZ}{\mathbb Z}
\newcommand{\Sph}{\mathbb S}
\newcommand{\Sp}[1]{\mathbb{S}^{#1}}
\newcommand{\rev}{^\mathrm{rev}} 

\newcommand{\h}{\operatorname{H}} 

\newcommand{\lk}{\operatorname{lk}} 
\newcommand{\fr}{\operatorname{fr}} 

\newcommand{\g}{\operatorname{g}}

\newcommand\extrafootertext[1]{%
    \bgroup
    \renewcommand\thefootnote{\fnsymbol{footnote}}%
    \renewcommand\thempfootnote{\fnsymbol{mpfootnote}}%
    \footnotetext[0]{#1}%
    \egroup
}

\title{Genus-zero links\\with prescribed knots as components}
\author{Raphael Appenzeller \and José Pedro Quintanilha}
\date{}

\begin{document}

\maketitle

\begin{abstract}
    We prove that any finite collection of at least three isotopy classes of knots in a \(3\)-manifold~\(M\) is realizable as the components of a genus-zero link in~\(M\), provided that an obvious requirement on their conjugacy classes in \(\pi_1(M)\) is met. 
    This condition is vacuously satisfied for \(M=\Sph^3\), and in this case we also control the pairwise linking numbers of the components.
    Replacing the \(3\)-genus with the \(4\)-genus, we obtain an analogous result where only two knot isotopy classes are prescribed.
\end{abstract}

\extrafootertext{2020 \textit{Mathematics Subject Classification}: 57K10, 57M10.}
\extrafootertext{\textit{Keywords}: knots, Seifert surface, link genus, three-manifolds, low dimensional topology.}
\extrafootertext{\textit{Date}: \today.} 



\section{Introduction}

A knot in~\(\Sph^3\) is a genus-zero link if and only if it is an unknot.
For two knots~\(K\),~\(K'\), the existence of a genus-zero link with components isotopic to \(K\)~and~\(K'\) is equivalent to \(K'\)~being isotopic to the orientation-reversed~\(K\rev\). Once three knots are given, however, the situation becomes much less restricted, as \Cref{fig:beautiful_knots} illustrates.

\begin{figure}[h!]
    \centering
    \def\svgwidth{0.46 \linewidth}
    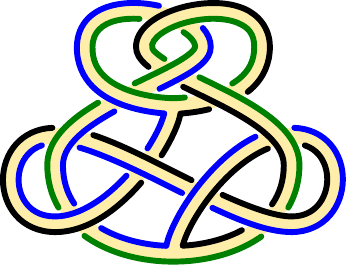
    \caption{A genus-zero surface in~\(\Sph^3\) whose boundary components are a left-handed trefoil (blue), a right-handed trefoil (black), and a figure-eight knot (green).}
    \label{fig:beautiful_knots}
\end{figure}

In this note, we show that any multiset of at least three (oriented) knots may be realized as a genus-zero link. In fact, our main theorem allows for the given knots to live in any \(3\)-manifold~\(M\), as long as a necessary homotopy-theoretical condition is met:

\begin{mainthm}[Genus-zero links of prescribed knots]\label{main_intro}
    Fix \(n\ge 3\) knots \(K_1, \dots, K_n\) in a connected \(3\)-manifold~\(M\), and for each~\(i\le n\) let \(C_i\subseteq\pi_1(M)\) be the conjugacy class represented by~\(K_i\). The following are equivalent:
    \begin{enumerate}
        \item The conjugacy classes \(C_1, \dots, C_n\)~form a canceling multiset.
        \item  There is an \(n\)-component genus-zero link whose components are isotopic to \(K_1 , \ldots , K_n\).
    \end{enumerate}    
\end{mainthm}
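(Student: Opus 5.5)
We first pin down the notion of ``canceling multiset,'' which has not yet been defined at this point in the paper. We take it to mean that there are representatives \(g_i\in C_i\) whose product \(g_1 g_2\cdots g_n\), taken in some order, equals \(1\) in \(\pi_1(M)\). This is the obvious necessary condition coming from a planar surface.

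\textbf{Direction (2)\(\Rightarrow\)(1).} Let \(\Sigma\) be a genus-zero surface with boundary components isotopic to \(K_1,\dots,K_n\). Then \(\Sigma\) is a disk with \(n-1\) holes. Choose a basepoint on \(\Sigma\) and arcs from it to each boundary component, arranged so that the resulting based loops \(\gamma_1,\dots,\gamma_n\) satisfy \(\gamma_1\cdots\gamma_n=1\) in \(\pi_1(\Sigma)\). This is the standard presentation of the free group of a punctured sphere. Pushing this relation into \(M\), and noting that \(\gamma_i\) represents \(C_i\), shows that the classes cancel.

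\textbf{Direction (1)\(\Rightarrow\)(2).} This is the substantive direction. I plan to build the surface in three steps.

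\emph{Step 1: a singular planar surface.} Fix a basepoint and choose based loops \(g_i\) freely homotopic to \(K_i\) with \(\prod g_i=1\). The relation gives a map of an \(n\)-holed sphere \(P\to M\) whose boundary circles map to curves homotopic to the \(K_i\). Build this map as a wedge of the loops \(g_i\), with a null-homotopy disk attached along the product.

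\emph{Step 2: fixing the boundary.} We need each boundary circle to be isotopic to \(K_i\), not merely homotopic to it. Homotopy differs from isotopy only by crossing changes, and each crossing change can be realized by a finger move. So I will arrange for the boundary circles to be exactly the knots \(K_i\), placed disjointly, and connect them by thin bands running along paths to a central region. This builds a planar surface near the boundary. Its remaining boundary is a single curve that is null-homotopic but possibly knotted, because of the band twisting and the crossing data.

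\emph{Step 3: removing singularities, where \(n\ge 3\) is used.} Here is the key idea. Once there are at least three components, the bands connecting \(K_1\) and \(K_2\) to the rest can be modified by crossing changes between a band and another band, or between a band and a knot. Such a modification changes the embedded surface but keeps it planar, and keeps the boundary components' isotopy classes fixed. The mechanism is to slide a band over a third boundary component: dragging the band through the \(K_3\) region realizes a crossing change of the band with itself up to isotopy of \(\partial\). Concretely, I would try the following. Any self-crossing of a band can be traded for a clasp with the tube around \(K_3\), which is then absorbed by isotoping \(K_3\) itself, since \(K_3\) is only prescribed up to isotopy. With this freedom, the final capping curve can be unknotted in a ball, and then bounds an embedded disk, which caps the surface. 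Immersed intersections of that disk with the surface are removed by the same tubing trick along a third component. Genus is never added, because each modification is a band or tube move that preserves Euler characteristic and connectivity of the boundary pattern.

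\textbf{Main obstacle.} I expect the main obstacle to be Step 3. The difficulty is showing rigorously that an arbitrary crossing change on the bands, which is needed to convert homotopy into isotopy, can be realized without increasing genus. This is exactly where \(n\ge 3\) is essential, since for \(n=2\) the statement fails. The first thing I would try is a local model. In that model, a ``band passing through itself'' is shown to be equivalent, by planar isotopy, to the band passing around a third boundary component once in each direction. This would be verified by an explicit picture of a pair of pants.
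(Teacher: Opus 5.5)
\textbf{There is a genuine gap in Step~3 of (1)\(\Rightarrow\)(2).} Your direction (2)\(\Rightarrow\)(1) is fine and matches the paper. But Step~3, which carries all the content of (1)\(\Rightarrow\)(2), is a hope rather than a mechanism. ``Sliding a band over \(K_3\)'' and ``trading a self-crossing of a band for a clasp with the tube around \(K_3\), absorbed by isotoping \(K_3\)'' are not well-defined moves. Nothing shows that they change your null-homotopic curve by a crossing change while preserving the knot types of the other components. Your justification that genus is never added is also false: a tube move lowers the Euler characteristic by \(2\), and on a connected surface it adds a handle.

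\textbf{The missing idea: re-embed the same abstract surface.} Your instinct that band-through-band crossing changes are the right move is sound, but it only works after the surface has been prepared, as follows. Suppose a crossing change on a boundary component \(K\) is guided by an arc \(\alpha\).
\begin{enumerate}
\item Isotope \(S\) so that \(\alpha\cap S=\partial\alpha\). Do this by dragging some other boundary component along an arc in \(S\) to each interior intersection point and past it.
\item Take a small ball \(B\) around \(\alpha\), so that \(S\cap B\) is two discs. Drag two further boundary components \(K_1\), \(K_2\) along disjoint arcs in \(S\setminus\mathring B\) into \(B\). Now \(S\cap B\) is two strips, one carrying a strand of \(K\) and a strand of \(K_1\), the other a strand of \(K\) and a strand of \(K_2\).
\item Perform the crossing change with both strips carried along. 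This changes \(K\) as prescribed, and every other crossing it changes is between distinct components. So \(K_1\) and \(K_2\) keep their knot types, and the abstract surface is unchanged.
\end{enumerate}
This is exactly where \(n\ge 3\) enters: \(K\), \(K_1\), \(K_2\) must be three different components.

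\textbf{The capping detour creates a second unsolved problem.} You add an extra null-homotopic boundary curve \(J\) and then cap it off. Even after \(J\) is made a trivial knot, the disc it bounds will generally meet \(S\) in its interior. It can do so in closed curves, and in arcs ending where other components pierce the disc. Its framing along \(J\) also need not agree with that of \(S\). Removing these intersections by tubing changes the topology of the surface.

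The paper avoids capping altogether. It builds the planar surface from disjoint knots representing only \(C_1^{-1},\dots,C_{n-1}^{-1}\). These carry collar annuli, banded to a disc along arcs realizing \(g_1^{-1},\dots,g_{n-1}^{-1}\), with the cyclic order of the bands chosen so the leftover boundary component automatically represents \(g_n\), hence \(C_n\). Every boundary component is then homotopic to its target knot. The crossing-change mechanism above, together with the black-box lemma that homotopic knots differ by crossing changes, upgrades homotopy to isotopy. With that mechanism in hand, your Step~1 is superfluous, and Step~2 should simply omit \(K_n\).
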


A multiset of conjugacy classes is ``canceling'' if these classes can by represented by elements whose product is trivial (\Cref{def:cancel}). In \Cref{sec:canceling_multiset} we give a topological interpretation of this condition, which explains why its failure obstructs the existence of a genus-zero link with the prescribed knots as components (\Cref{topological-cancellation}).

\Cref{main_intro} is restated and proved in \Cref{sec:main_proof} as \Cref{main}. The argument for the implication \((1\Rightarrow 2)\) proceeds in two steps. First, we find an embedded genus-zero surface \(S_0\subset M\) with \(n\)~boundary components that are merely homotopic to \(K_1, \dots, K_n\) (\Cref{starting_surface}). Second, we explain how to modify these boundary components so they become knots in the correct isotopy classes (\Cref{change_the_boundary_knots}).

One should generally expect the components of a link~\(L\) as produced by \Cref{main_intro} to be intricately linked. 
However, in \Cref{sec:linking} we show that for \(M=\Sph^3\) it is possible to fully control their pairwise linking numbers (\Cref{adjust_linking}), proving (as \Cref{full_linking_result}):

\begin{maincor}[Genus-zero links in \(\Sph^3\) of prescribed knots and linking numbers]\label{thm:linking_intro}
    Let \(K_1, \dots, K_n\) be \(n \geq 3\) knots 
    in~\(\Sph^3\), and let \((\ell_{ij})_{i,j}\) be a symmetric \(n\times n\) matrix of integers. The following are equivalent:
    \begin{enumerate}
        \item For each \(i\in\{1, \dots, n\}\), we have
        \(\sum_{j=1}^n \ell_{ij}= 0.\)
        \item There is a genus-zero link in \(\Sph^3\) with components \(K_1', \dots, K_n'\), such that for all \(i,j\in \{1, \dots, n\}\),
        \begin{enumerate}
            \item the knot \(K_i'\) is isotopic to~\(K_i\),
            \item if \(i \neq j \) then the linking number of $K_i'$ and $K_j'$ is $\ell_{ij}$, and
            \item the framing of $K_i'$ induced by some / every Seifert surface is $\ell_{ii}$. 
        \end{enumerate}
    \end{enumerate}
\end{maincor}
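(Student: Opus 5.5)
The implication \((2\Rightarrow 1)\) is a homological computation on the genus-zero surface. Let \(S\) be a genus-zero Seifert surface for the link, with \(\partial S = K_1'\cup\dots\cup K_n'\). Push \(S\) off itself along its positive normal direction to obtain \(S^+\). For \(i\neq j\) we have \(\lk(K_i',K_j') = K_i'\cdot S^+ = \lk(K_i', (K_j')^+)\) summed appropriately. The cleanest formulation is this: for a fixed \(i\), the pushoff \((K_i')^+\) is disjoint from \(S\), so
\[
0=\lk\Bigl((K_i')^+, \sum_j K_j'\Bigr) = \fr(K_i') + \sum_{j\neq i}\lk(K_i',K_j').
\]
Here \(\fr(K_i')=\lk(K_i',(K_i')^+)\) is the Seifert framing. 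This gives exactly \(\sum_j \ell_{ij}=0\). The same identity shows that the framing is independent of the chosen Seifert surface once the linking numbers are fixed, which justifies the phrase ``some / every''.

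For \((1\Rightarrow 2)\), first note that in \(\Sph^3\) every multiset of conjugacy classes is trivially canceling. So \Cref{main_intro} (i.e.\ \Cref{main}) produces a genus-zero link \(L\) with components isotopic to \(K_1,\dots,K_n\). Its linking matrix \((m_{ij})\) then automatically satisfies \(\sum_j m_{ij}=0\) by the computation above.

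The remaining task is to adjust linking numbers without changing the knot types or the genus. I will use the local move from \Cref{adjust_linking}, which I take to be of the following form. For \(i\neq j\) with \(i,j,k\) distinct (here \(n\ge3\) is needed), choose arcs on \(S\) joining the boundary components. A band-twisting or ``finger'' move passes a strip of \(S\) near \(K_i\) around a strip near \(K_j\). This changes \(\lk(K_i,K_j)\) by \(+1\), changes \(\lk(K_i,K_k)\) and \(\lk(K_j,K_k)\) by \(-1\), and adjusts the framings so that the row sums stay zero. In matrix terms, the move adds the elementary zero-row-sum symmetric matrix supported on \(\{i,j,k\}\), namely
\[
E_{ij}+E_{ji}-E_{ik}-E_{ki}-E_{jk}-E_{kj}+E_{kk}\cdot 2 \text{ (and similar diagonal terms)}.
\]
Such a move is realized by an isotopy of the surface's boundary that crosses strands of different components. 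Knot types are preserved because each component only passes through the others.

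Next comes the linear-algebra step. The lattice of symmetric integer matrices with zero row sums is determined by its off-diagonal entries, since the diagonal is then forced. The off-diagonal entries are free. Hence it suffices to show that the available moves generate arbitrary changes of single off-diagonal pairs \(\ell_{ij}\), with all compensating changes pushed into the diagonal. Concretely, I expect a move changing \(\lk(K_i,K_j)\) by \(\pm1\) while changing only \(\fr(K_i)\) and \(\fr(K_j)\) by \(\mp1\). This is a crossing change between \(K_i\) and \(K_j\), implemented by sliding one component over another through a tube or band rather than through the surface. The third component is what lets one reroute the surface to avoid adding genus. Applying such moves repeatedly reaches the target \((\ell_{ij})\).

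The main obstacle is making that crossing-change move preserve genus zero. A naive crossing change between two boundary components would puncture \(S\). My first attempt is to use the third boundary component: drag a small arc of \(K_i\) along a path in \(S\) parallel to a boundary arc of \(K_k\), wind it once around a meridian of \(K_j\), and return. This is a boundary isotopy of the surface (a finger move of \(S\)), so the genus and the knot types are unchanged, while \(\lk(K_i,K_j)\) changes by one.
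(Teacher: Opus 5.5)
Your argument for $(2\Rightarrow 1)$ is correct. The normal push-off $(K_i')^+$ is disjoint from $S$, so the identity $0=\fr(K_i')+\sum_{j\neq i}\lk(K_i',K_j')$ follows. This is a valid substitute for the paper's computation in $\h_1$ of the link exterior, and it also justifies ``some / every''. Your reduction of $(1\Rightarrow 2)$ is also the right structure: apply \Cref{main}, then use a move that changes a single $\lk(K_i,K_j)$, with the framings forced afterwards by the row sums.

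The gap is that you never supply such a move. Your concrete candidate is to drag an arc of $K_i$ once around a meridian of $K_j$ and back, as ``a boundary isotopy of the surface (a finger move of $S$)''. This defeats itself. If $S$ moves through embeddings, then $\partial S$ undergoes a link isotopy and every linking number is preserved. If instead the finger is allowed to cross $S$ or $K_j$ (which it must, since any loop around $K_j$ meets $S$), then it is not an isotopy, and nothing guarantees that the result is an embedded genus-zero surface with the same knot types.

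Your earlier three-index ``band-twisting'' move has the same defect: it is justified only as an ``isotopy'' that crosses strands of different components. In any case it would not suffice for $n=3$. Each such move changes all three off-diagonal entries by odd amounts, so combinations only reach changes congruent to $(0,0,0)$ or $(1,1,1)$ mod $2$.

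The missing idea is the paper's \Cref{adjust_linking}, and it needs no third component. Take a properly embedded arc $\alpha\subset S$ from $K_i$ to $K_j$, and insert $m=\ell_{ij}-\lk(K_i,K_j)$ full twists into a narrow strip of $S$ around $\alpha$. Equivalently, twist along a small unknot that encircles this strip and is disjoint from $S$. This is deliberately not an isotopy, which is why it can change a linking number. Its properties are:
\begin{itemize}
\item abstractly the surface only has a band removed and reattached along the same pair of boundary arcs, so it stays embedded and diffeomorphic to $S$;
\item each of $K_i$ and $K_j$ passes through the twisted region in a single strand, so its knot type is unchanged;
\item no other component is touched;
\item $\lk(K_i,K_j)$ changes by exactly $m$.
\end{itemize}
With this move in place of your finger move, your outline goes through. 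The hypothesis $n\ge 3$ is needed only to invoke \Cref{main}, not for the linking adjustment.
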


In \Cref{sec:n=2} we discuss the case of \(n=2\) prescribed knots \(K_1\), \(K_2\). While there may not be a genus-zero link with components isotopic to \(K_1\), \(K_2\), we show that there is a link of vanishing \(4\)-genus (\Cref{cor:zero_4genus}):

\newpage 

\begin{maincor}[Links with vanishing \(4\)-genus of prescribed knots]\label{maincor:zero_4genus}
    For all knots~\(K_1\), \(K_2\subset M\), the following are equivalent:
    \begin{enumerate}
        \item \(K_1\)~is homotopic to~\(K_2\rev\),
        \item There is a two-component link~\(L\subset M\) with components isotopic to~\(K_1\),~\(K_2\), such that \(\g_4(L)=0\). 
    \end{enumerate} 
\end{maincor}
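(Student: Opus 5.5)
Both implications go through the slice definition of \(\g_4\). Here \(\g_4(L)=0\) means that \(L\subset M=M\times\{0\}\) bounds a smooth, properly embedded genus-zero surface in \(M\times[0,1]\). For a two-component link such a surface is an annulus \(A\).

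For \((2\Rightarrow 1)\), I would take the annulus \(A\) bounded by \(L\). Its core circle gives a free homotopy in \(A\subset M\times[0,1]\) from one boundary component to the reverse of the other. Composing with the projection \(M\times[0,1]\to M\) yields a homotopy in \(M\) from \(K_1\) to \(K_2\rev\). The orientation reversal comes from the boundary orientation of the annulus. Isotopy invariance then transfers this to the original knots.

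For \((1\Rightarrow 2)\), I would build the annulus as a trace in \(M\times[0,1]\). A homotopy from \(K_1\) to \(K_2\rev\) in \(M\) can be perturbed to be generic, so it factors as an isotopy interrupted by finitely many crossing changes. Each crossing change contributes a double point to the immersed trace annulus in \(M\times[0,1]\). The idea is to avoid self-intersections by not actually performing the crossing changes. Instead I resolve each would-be crossing change by a band move, which costs genus. Alternatively, I replace the target knot by one differing from \(K_2\rev\) only by a local knotted piece that I can cancel.

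The cleanest route I would try is to exploit the freedom in choosing \(L\). I take \(K_1\) far away, and for the second component I take \(K_1\rev\) modified near a crossing-change region. The point is that \(L\) need not be a split link: the second component only needs to be isotopic to \(K_2\). Concretely, I first apply \Cref{main_intro} to the three knots \(K_1\), \(K_2\), and an auxiliary knot \(U\). The conjugacy classes of \(K_1\) and \(K_2\) are inverse, so adding the trivial class of an unknot \(U\) gives a canceling multiset. This produces a genus-zero surface \(P\) in \(M\), a pair of pants, with boundary components isotopic to \(K_1\), \(K_2\), \(U\). I then push the interior of \(P\) into \(M\times[0,1]\) and cap off \(U\) with a disk. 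If \(U\) is an unknot in a ball, it bounds an embedded disk in \(M\). Pushing \(P\) to lie in \(M\times[0,1/2]\) and the disk to \(M\times\{1\}\), joined along \(U\times[1/2,1]\), gives an embedded annulus in \(M\times[0,1]\). Its boundary in \(M\times\{0\}\) is a link with components isotopic to \(K_1\) and \(K_2\). This is a slice annulus, so \(\g_4(L)=0\).

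The main obstacle is checking that \Cref{main_intro} really allows an unknot component with trivial conjugacy class. It also requires that the resulting annulus is smooth and embedded, which means the disk bounded by \(U\) must avoid the rest of \(P\) after pushing. This is handled by placing \(P\) at depths in \(M\times[0,1/2]\) and the disk at level \(1\), with the collar \(U\times[1/2,1]\) disjoint from everything else. The level-separation argument makes embeddedness automatic, so the only real input is the canceling-multiset verification, namely \([K_1][K_2][1]=1\) with suitable representatives.
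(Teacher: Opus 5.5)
Your argument is correct and is essentially the paper's: \((2\Rightarrow 1)\) is the \(n=2\) case of \Cref{topological-cancellation} (your core circle plus projection to \(M\) is exactly that), and \((1\Rightarrow 2)\) applies \Cref{main_intro} to \(K_1\), \(K_2\) and an unknot \(U\) and then caps off \(U\), which the paper packages as \Cref{remove-slice-component}. The one thing to tidy is that a disk lying in \(M\times\{1\}\) is not properly embedded, so cap off \(U\) at an interior level (e.g.\ at \(M\times\{3/4\}\) with collar \(U\times[1/2,3/4]\)) and smooth corners; the exploratory crossing-change/band-move paragraph is not needed and can be dropped.
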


\subsection*{Acknowledgements}
We are grateful to Sebastian Baader for pointing out the interpretation of band surgery in terms of embedded surfaces, when we were thinking about a magic trick involving knots, see \Cref{rem:band_surgery}. We are also grateful for inspiring discussions with Stefan Friedl and Paula Truöl. The authors thank the Inkscape community for developing and maintaining the free and open-source vector graphics software used to prepare the figures in this work.

\subsection*{Statement of originality} 
Near the end of the drafting of this manuscript, the authors noticed that the statement of \Cref{main_intro} in the case \(M=\Sp3\) had already appeared in a retracted preprint of Murakami \cite[Proposition~6.1]{Mur06}. The argument therein is different from the one we present, resting on the main result of an article of Kinoshita \cite{Kin87} about realizing prescribed knots as the cycles in \(\theta\)-curves, which are a certain type of spatial graphs.

\section{Canceling conjugacy classes}\label{sec:canceling_multiset}

We begin with a discussion on the \(\pi_1\)-condition featured in~\Cref{main_intro}, which obstructs a collection of isotopy classes of knots in a \(3\)-manifold assembling to a genus-zero link.
Since knots are typically not pointed objects, they do not represent elements of the fundamental group, but only conjugacy classes. While one cannot generally speak of multiplication of conjugacy classes as a well-defined operation, we will rescue the idea of them multiplying to the trivial element.
Before the proper definition, we show this is an order-independent notion.

\begin{lem}[Cancellation is order independent]\label{order_does_not_matter}
    Let \(C_1, \dots, C_n\) be conjugacy classes in a group~\(G\), and let \(\sigma\in \operatorname{Sym}(n)\) be a permutation. If there are \(g_i\in C_i\) for every \(i\in \{1,\dots, n\}\) such that \(g_1\cdots g_n = 1\), then there also exist \(g_i'\in C_i\) with \(g_{\sigma(1)}'\cdots g_{\sigma(n)}' = 1\).
\end{lem}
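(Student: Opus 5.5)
The plan is to reduce to generators of the symmetric group. Since \(\operatorname{Sym}(n)\) is generated by the adjacent transpositions \((k\;k+1)\) for \(1\le k<n\), it suffices to prove the statement when \(\sigma\) is a single adjacent transposition. The general case then follows by induction on the length of a word for \(\sigma\) in these generators. The hypothesis ``there exist representatives \(g_i\in C_i\) whose product in a given order is trivial'' is a property of the ordered tuple \((C_1,\dots,C_n)\). So it is enough to show that this property is preserved when two consecutive entries of the tuple are swapped; composing such swaps then realizes any reordering.

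For the adjacent swap of positions \(k\) and \(k+1\), I will use the identity \(g_kg_{k+1} = g_{k+1}\,(g_{k+1}^{-1}g_kg_{k+1})\). I define \(g_i' = g_i\) for \(i\notin\{k,k+1\}\), set \(g_{k+1}' = g_{k+1}\in C_{k+1}\), and set \(g_k' = g_{k+1}^{-1}g_kg_{k+1}\). The latter lies in \(C_k\), because conjugacy classes are closed under conjugation. Then
\[
g_1'\cdots g_{k-1}'\,g_{k+1}'\,g_k'\,g_{k+2}'\cdots g_n' \;=\; g_1\cdots g_{k-1}\,g_kg_{k+1}\,g_{k+2}\cdots g_n \;=\; 1,
\]
which is the required product for the permuted order.

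Two bookkeeping points need care. First, I must make sure the indexing convention matches the statement, where the product is \(g'_{\sigma(1)}\cdots g'_{\sigma(n)}\) with \(g'_i\in C_i\). The product reads the classes in the order \(C_{\sigma(1)},\dots,C_{\sigma(n)}\), so the claim is exactly that the tuple reordered by \(\sigma\) again admits canceling representatives. Second, for the induction I must phrase the inductive statement for arbitrary tuples of classes, not just the original one, so that the adjacent-swap step can be applied repeatedly. Neither point is a real obstacle. The only substantive idea is the conjugation trick above, and cyclic rotations of the product (which follow from \(ab=1\Leftrightarrow ba=1\)) are not even needed.
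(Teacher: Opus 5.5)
Your proof is correct and matches the paper's: reduce to adjacent transpositions, then absorb each swap by a conjugation. The only difference is cosmetic — the paper conjugates the other element, setting \(g_{k+1}' = g_k g_{k+1} g_k^{-1}\) where you conjugate \(g_k\) by \(g_{k+1}\) — and your remark that the swap step must be stated for arbitrary tuples makes explicit what the paper leaves implicit.
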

\begin{proof}
    For every \(k\in \{1,\ldots, n-1\}\), the equality \(g_1 \cdots g_n = 1\) is equivalent to
    \[g_1 \dots (g_kg_{k+1}g_k^{-1}) g_k g_{k+2}\cdots g_n=1.\]
    Thus, for the transposition \(\sigma = (k\quad k+1)\), the statement holds with \(g_{k+1}'\coloneq g_{k}g_{k+1}g_k^{-1}\) and \(g_i'\coloneq g_i\) for \(i\neq k+1\).
    As the adjacent transpositions generate \(\operatorname{Sym}(n)\), the conclusion extends to all permutations.
\end{proof}

\Cref{order_does_not_matter} ensures that the following is a well-defined notion for an (unordered) multiset of conjugacy classes.

\begin{dfn}\label{def:cancel}
    A multiset of conjugacy classes \(C_1, \dots, C_n\) in a group~\(G\) is \textbf{canceling} if there are \(g_1 \in C_1\), \ldots , \(g_n\in C_n\) such that \(g_1\cdots g_n = 1\).
\end{dfn}

    

We now give a topological interpretation of the property of a multiset of conjugacy classes being canceling.
Recall that a free homotopy class of a loop~\(\gamma\) in a path-connected topological space~\(X\) represents a conjugacy class in \(\pi_1(X,p)\) with respect to any basepoint \(p\in X\), which is given by choosing a path~\(\alpha\) in~\(X\) from~\(p\) to the starting point of~\(\gamma\), and taking the pointed homotopy class of the concatenation \(\alpha * \gamma *\alpha^{-1}\). For different choices of~\(\alpha\) the corresponding elements of $\pi_1(X,p)$ differ by an inner automorphism of \(\pi_1(X,p)\), and every element in the conjugacy class may be produced in this way. Moreover, given another basepoint \(p'\in X\), we obtain an isomorphism \(\pi_1(X,p') \cong \pi_1(X,p)\) by applying the same construction with a path~\(\alpha\) from \(p'\) to~\(p\). This allows us to speak of the (basepoint-free) homotopy class of a loop~\(\gamma\) as ``a conjugacy class in~\(\pi_1(X)\)'' without making basepoints explicit.

\begin{prop}[Cancellation, topologically]\label{topological-cancellation}
    Let \(\Sigma_n\) be an oriented compact connected genus-zero surface with \(n\)~boundary components \(\gamma_1, \dots, \gamma_n\). Let  \(X\) be a nonempty path-connected topological space, and \(C_1, \dots,C_n\) conjugacy classes in \(\pi_1(X)\). The following are equivalent:
    \begin{enumerate}
        \item The classes \(C_1, \dots, C_n\) form a canceling multiset.
    
        \item There is a continuous map \(f\colon \Sigma_n \to X\) such that for each \(i\in \{1, \dots, n\}\), the oriented loop~\(f(\gamma_i)\) represents~\(C_i\).
    \end{enumerate}
\end{prop}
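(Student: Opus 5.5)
Both implications rest on the standard presentation of \(\pi_1(\Sigma_n)\). We model \(\Sigma_n\) as a closed disk~\(D\) with \(n-1\) open subdisks removed. We choose a basepoint~\(q\) on~\(\gamma_n\) and, for \(i<n\), disjoint embedded arcs~\(\alpha_i\) from~\(q\) to a point of~\(\gamma_i\). Then \(\pi_1(\Sigma_n,q)\) is free on the classes \(x_i=[\alpha_i*\gamma_i*\alpha_i^{-1}]\) for \(i<n\). Moreover, with the arcs ordered suitably around~\(q\) and the boundary orientations induced from the orientation of~\(\Sigma_n\), the loop \(x_n=[\gamma_n]\) based at~\(q\) satisfies \(x_1x_2\cdots x_n=1\). (If the conventions give the reverse order, we apply \Cref{order_does_not_matter}.) Equivalently, \(\Sigma_n\) deformation retracts onto a wedge of \(n-1\) circles, and the relation above is the boundary relation of a disk with holes. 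I would state this as a lemma, justified by a picture or by the usual CW structure in which \(\Sigma_n\) is obtained from the wedge of \(n\) circles \(\bigvee \gamma_i\) (joined by the arcs) by attaching one \(2\)-cell along the word \(x_1\cdots x_n\).

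For \((2\Rightarrow1)\): given~\(f\), we set \(p=f(q)\) and \(g_i=f_*(x_i)\in\pi_1(X,p)\). Each \(g_i\) is represented by \(f(\alpha_i)*f(\gamma_i)*f(\alpha_i)^{-1}\), which by the discussion preceding the proposition lies in the conjugacy class of the free loop \(f(\gamma_i)\), namely~\(C_i\). Since \(f_*\) is a homomorphism, \(g_1\cdots g_n=f_*(x_1\cdots x_n)=1\), so the multiset is canceling.

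For \((1\Rightarrow2)\): we pick \(g_i\in C_i\) with \(g_1\cdots g_n=1\) in \(\pi_1(X,p)\), and represent each \(g_i\) by a loop \(c_i\) based at~\(p\). Using the CW structure, we define \(f\) on the \(1\)-skeleton by sending each arc \(\alpha_i\) to the constant path at~\(p\) and each circle \(\gamma_i\) to \(c_i\). The attaching map of the \(2\)-cell is then sent to \(c_1*\cdots*c_n\), which is null-homotopic because \(g_1\cdots g_n=1\), so \(f\) extends over the \(2\)-cell. Each \(f(\gamma_i)=c_i\) then represents \(g_i\), hence~\(C_i\), as a free loop.

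The main obstacle is the bookkeeping in the lemma: getting the orientations and the cyclic order of the generators right, so that the relation is exactly \(x_1\cdots x_n=1\) with every boundary component carrying its induced orientation, rather than some inverse. Order issues are absorbed by \Cref{order_does_not_matter}, and orientation conventions must be fixed consistently. The case \(n=1\) (a disk, where \(C_1\) must be trivial) and \(n=2\) (an annulus) are covered by the same argument.
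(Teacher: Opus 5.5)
Your proposal is correct and follows essentially the paper's argument: a CW structure on \(\Sigma_n\) with a single \(2\)-cell attached along a product of conjugates of the boundary loops. For \((2\Rightarrow1)\) you push this relation forward along \(f_*\), and for \((1\Rightarrow2)\) you extend \(f\) over the \(2\)-cell. The differences are cosmetic and equally valid: the paper puts the basepoint in the interior with arcs to all \(n\) boundary components and sends the arcs to connecting paths \(\beta_i\), whereas you base at a point of \(\gamma_n\) and pick based representatives \(g_i\) directly, so the arcs go to constant paths (only the trivial case \(n=0\), a sphere, falls outside your disk-with-holes model).
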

\begin{proof}
    Before directly addressing either of the implications, we introduce a CW structure on~\(\Sigma_n\), illustrated in \Cref{fig:CW_structure}. There is one \(0\)-cell~\(p\) in the interior of~\(\Sigma_n\), and one in each boundary component~\(\gamma_i\). Each~\(\gamma_i\) is a \(1\)-cell, and there are also \(1\)-cells~\(\alpha_i\) connecting~\(p\) to~\(\gamma_i\). These arcs~\(\alpha_i\) emanate from~\(p\) in a cyclic order opposite to the orientation of~\(\Sigma_n\). Lastly, there is a \(2\)-cell attached along the loop \((\alpha_1*\gamma_1*\alpha_1^{-1})*\cdots*(\alpha_n *\gamma_n*\alpha_n^{-1})\).
    
    \begin{figure}[h!]
        \centering
        \def\svgwidth{0.38\linewidth}
        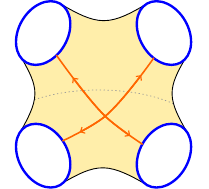
        \caption{A CW structure on the surface~\(\Sigma_n\), exemplified with \(n=4\). }
        \label{fig:CW_structure}
    \end{figure}

    \((2 \Rightarrow 1)\) Since the property of being a canceling multiset is preserved under group homomorphisms, it suffices to show that the~\(\gamma_i\) represent a canceling multiset of conjugacy classes in \(\pi_1(\Sigma_n)\). We represent these classes as loops based at~\(p\) using the arcs \(\alpha_1, \dots, \alpha_n\); explicitly, we consider the pointed homotopy classes of the loops \(\alpha_i * \gamma_i *\alpha_i^{-1}\) in~\(\pi_1(X,p)\). These group elements satisfy
    \[(\alpha_1 * \gamma_1 *\alpha_1^{-1}) * \cdots* (\alpha_n * \gamma_n *\alpha_n^{-1}) =1,\]
    since this path gives the attaching map for the \(2\)-cell in~\(\Sigma_n\). 

    \((1 \Rightarrow 2)\) Choose \(x\in X\), and for every \(i\in \{1,\dots, n\}\) choose a loop \(\delta_i\) in~\(X\) representing \(C_i\). Next, choose for each~\(i\) a path \(\beta_i\) in~\(X\) from \(x\) to the starting point of~\(\delta_i\), such that in \(\pi_1(X,x)\) we have
    \[(\beta_1*\delta_1*\beta_1^{-1}) * \cdots*(\beta_n*\delta_n*\beta_n^{-1}) = 1;\]
    the existence of the~\(\beta_i\) is ensured by Condition~1.

    We define the map~\(f\) on the \(1\)-skeleton~\(\Sigma_n^{(1)}\) by sending~\(p\) to~\(x\), each boundary component~\(\gamma_i\) to~\(\delta_i\), and each arc~\(\alpha_i\) to~\(\beta_i\) in the obvious way. The loop along which the \(2\)-cell of~\(\Sigma_n\) is attached is therefore mapped to the \(\pi_1\)-trivial loop in the above formula. This means \(f\)~may be extended from~\(\Sigma^{(1)}_n\) to all of~\(\Sigma_n\).
\end{proof}

\section{Proof of \Cref{main_intro}}\label{sec:main_proof} 

Throughout this article, manifolds and embeddings are understood to be smooth.  
For this section, we fix a connected \(3\)-manifold~\(M\). A \textbf{link} in~\(M\) is the oriented image of an embedding of a finite disjoint union of copies of $\Sp1$ in the interior~\(\mathring M \coloneq  M\setminus \partial M\). Link isotopies in~\(M\) are assumed to be smooth and to have image in~\(\mathring M\).
A \textbf{knot}~\(K\) is a one-component link; we denote by~\(K\rev\) the knot~\(K\) with its orientation reversed. A \textbf{Seifert surface} for a link~$L$ is an oriented compact connected surface~\(S\) embedded in~\(M\) with $\partial S =L$ (as oriented manifolds).
If \(L\)~has a Seifert surface~\(S\), the \textbf{genus} of~\(L\) is the minimal genus of such an~\(S\).

\subsection{Prescribing boundary homotopy classes}\label{sec:hom_prescribe}

The first step in proving \Cref{main_intro} is constructing an embedded surface in~\(M\) whose boundary components lie in the correct homotopy classes. In the case \(M= \Sph^3\), any embedded surface satisfies this condition.

\begin{prop}[Prescribing boundary homotopy classes]\label{starting_surface}
    For every \(n \in \NN\) and every canceling multiset of conjugacy classes \(C_1, \dots, C_n\)  in~\(\pi_1(M)\), there exists an oriented embedded genus-zero surface \(S \subset M\) with \(n\) boundary components, each representing one of the given conjugacy classes.
\end{prop}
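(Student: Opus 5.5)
Choose representatives \(g_i\in C_i\) with \(g_1\cdots g_n=1\) in \(\pi_1(M,x)\) (by \Cref{def:cancel}), and build the surface directly near a wedge of embedded loops. The case \(n=0\) is vacuous or degenerate: a sphere is excluded since we need \(n\) boundary components, so take a small disk-free sphere if the conventions allow, or simply treat \(n\ge1\). For \(n=1\) the class \(C_1\) is trivial and a small embedded disk works.

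For \(n\ge 2\), the main construction is as follows. Since \(\dim M=3\), each \(g_i\) is represented by a smooth loop based at \(x\), and by general position we may take \(g_1,\dots,g_{n-1}\) to be embedded loops \(\delta_1,\dots,\delta_{n-1}\). They pairwise meet only at \(x\) and leave \(x\) in pairwise distinct tangent directions. A regular neighborhood of this wedge is a handlebody \(H\) of genus \(n-1\).

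Inside \(H\), take a thickened-vertex disk \(D\) around \(x\), together with bands \(b_i\) running along each \(\delta_i\), each attached to \(D\) at both of its ends. Choose each band to be untwisted with respect to some orientable thickening. Orientability can always be arranged: at \(x\) we choose the two attaching arcs of \(b_i\) on \(\partial D\) so that the band is orientation-compatible with \(D\), which is possible because the core is a loop in an orientable ambient \(3\)-manifold and we may add a full twist if needed.

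The ordering of the attaching arcs along \(\partial D\) must be chosen carefully. Attach the bands in a "parallel" pattern: in cyclic order around \(\partial D\), place \(b_1^-, b_1^+, b_2^-, b_2^+,\dots\), so that the bands are unnested and unlinked at the vertex. The resulting surface \(S=D\cup\bigcup b_i\) is then a planar surface with \(n\) boundary components:
\begin{itemize}
\item \(n-1\) of them run once along each \(\delta_i\) (from the inner side of each band);
\item one outer boundary component runs along \(\delta_1^{-1}\cdots\delta_{n-1}^{-1}\) up to orientation and conjugation.
\end{itemize}
This is exactly the planar surface from \Cref{fig:CW_structure}, thickened. Orienting \(S\), the boundary classes are \(C_1,\dots,C_{n-1}\) and the class of \((g_1\cdots g_{n-1})^{-1}=g_n\), that is, \(C_n\). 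Here one checks the signs using the CW description in the proof of \Cref{topological-cancellation}.

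The main obstacle is making the bands embedded and planar simultaneously: the loops \(\delta_i\) may be knotted and tangled, and the bands must not intersect. Since the \(\delta_i\) are disjoint away from \(x\), thin bands around them are disjoint, so this reduces to the local picture at \(x\). There, the planar ordering is a choice we make freely, by isotoping the germs of the \(\delta_i\) near \(x\) to lie in a plane in the prescribed order. Such an isotopy of germs does not change the based homotopy classes, provided we keep it supported in a small ball around \(x\).

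A remaining subtlety is that the orientation of the band \(b_i\) must match \(D\) at both ends. If it fails, we insert a half-twist, which is allowed because twisting does not change homotopy classes of the boundary curves. Parity could be an issue, since a band with an odd number of half-twists gives a Möbius-type gluing. We resolve this by choosing the side of attachment, i.e. reversing which end attaches at \(b_i^+\), so that the result is orientable.
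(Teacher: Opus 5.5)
Your construction is essentially the paper's: a disk around the basepoint with \(n-1\) non-interleaved petals realizing \(g_1,\dots,g_{n-1}\), placed in a controlled cyclic order so that the remaining boundary component realizes \(g_n\). The paper packages each petal differently. It takes a thin oriented annulus around a separate knot \(K_i'\) (representing \(C_i^{-1}\)) and joins it to \(D\) by a band along an embedded arc \(\alpha_i\). It then fixes the signs by prescribing the cyclic order \(\alpha_{n-1},\dots,\alpha_1\) on the oriented circle \(\partial D\). This sidesteps your rearrangement of germs at \(x\) and any discussion of twisting of bands along loops. Your argument can be completed, but three statements in your write-up are wrong and need repair.

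First, the sign and order bookkeeping. The outer word \(\delta_1^{-1}\cdots\delta_{n-1}^{-1}\) is in general conjugate neither to \(g_n=g_{n-1}^{-1}\cdots g_1^{-1}\) nor to its inverse once \(n\ge 4\); for instance, this fails whenever \(g_1,g_2,g_3\) generate a free group. Also, ``up to orientation'' is not available: the boundary orientations are induced by \(S\) and must give exactly \(C_i\), not \(C_i^{-1}\). The correct bookkeeping goes as follows. Suppose every \(\delta_i\) leaves \(D\) through \(b_i^-\), as your notation suggests. Then one orientation of \(S\) makes \emph{all} inner components represent \(\delta_i\) at once. For that orientation, your pattern \(b_1^-,b_1^+,\dots,b_{n-1}^-,b_{n-1}^+\) is read against the orientation of \(\partial D\). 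The outer component then represents \(\delta_{n-1}^{-1}\cdots\delta_1^{-1}=(\delta_1\cdots\delta_{n-1})^{-1}\), which is what you finally assert.

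Second, orientability. Adding a full twist changes nothing, and there is no parity obstruction at all. The part of \(b_i\) outside \(D\) is a thickening of an arc, whose normal bundle is trivial; orientability of \(M\) plays no role here. So you can always choose the half-twist parity that makes \(D\cup b_i\) orientable. Your alternative remedy, reversing which end of \(\delta_i\) attaches at \(b_i^+\), must be discarded. It breaks the uniform convention just described and makes that inner component represent \(C_i^{-1}\).

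Third, for \(n=0\) an embedded \(2\)-sphere is exactly the required surface, since it has zero boundary components; this is what the paper uses.
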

\begin{proof}
    If \(n=0\), we can take \(S\) to be any embedded \(2\)-sphere in \( M\), so assume from now on that \(n\ge 1\). The construction of~\(S\) in the case \(n=4\) is illustrated in \Cref{fig:starting_surface}.

    \begin{figure}[h!]
    \centering
    \def\svgwidth{0.74\linewidth}
    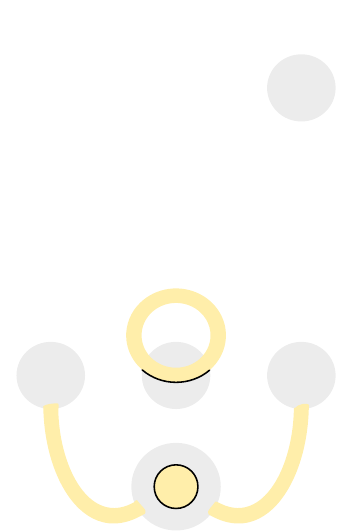
    \caption{The construction of~\(S\) for \(n=4\). The dotted regions might be linked, knotted, and twisted within the ambient manifold~\(M\).}
    \label{fig:starting_surface}
\end{figure}

    Choose pairwise disjoint knots \(K_1', \dots, K'_{n-1} \subset M\) representing the classes \(C_1^{-1}, \dots, C_{n-1}^{-1}\), respectively.
    We also fix a basepoint \(p\in M\), so the~\(C_i\) are regarded as subsets of~\(\pi_1(M, p)\). By assumption, there are \(g_i\in C_i\) with \(g_1\cdots g_n = 1\). 
    Moreover, for each \(i\in \{1,\dots,n-1\}\) there is an embedded arc \(\alpha_i \subset M\) from~\(p\) to a point of~\(K'_i\), such that the loop at~\(p\) given by the concatenation \(\alpha_i * K'_i* \alpha_i^{-1}\) represents~\(g_i^{-1}\). Up to a small isotopy relative endpoints, we may assume that each arc~\(\alpha_i\) has interior disjoint from all the other arcs, and from all the knots.
    
    Choose a small oriented embedded disk \(D\subset M\) containing~\(p\) in its interior. With an additional isotopy of each~\(\alpha_i\), we may arrange that \(\alpha_i \cap D\)~is an initial segment of~\(\alpha_i\) connecting~\(p\) to~\(\partial D\) and, in addition, the (oriented) circle \(\partial D\) meets the arcs \(\alpha_i\) in the cyclic order \(\alpha_{n-1}, \alpha_{n-2}, \dots, \alpha_1, \alpha_{n-1}\).

    Next, we choose for each \(i\in\{1, \dots, n-1\}\) a thin oriented embedded annulus \(A_i \subset M\) having~\(K_i'\) as a boundary component, and otherwise disjoint from the rest of the construction. The second boundary component of~\(A_i\) is a knot~\(K_i\) isotopic to~\((K_i')\rev\), and hence represents the conjugacy class~\(C_i\).

    Finally, we use the arcs~\(\alpha_i\) to produce thin bands~\(B_i\subset M\) joining~\(D\) to the~\(A_i\), in a manner consistent with orientations, so that the union
    \[S \coloneq D \cup\biggl( \bigcup_{i=1}^{n-1} A_i \biggr) \cup \biggl(\bigcup_{i=1}^{n-1}B_i\biggr)\]
    is an oriented embedded surface of genus \(0\) in~\(M\). Its boundary consists of the knots \(K_1, \dots, K_{n-1}\), and one last component~\(K_n\), homotopic to the loop
    \[(\alpha_{n-1}*K_{n-1}'*\alpha_{n-1}^{-1}) * \dots * (\alpha_1*K_1'*\alpha_1^{-1}).\]
    Hence, \(K_n\)~represents the conjugacy class of \(g_{n-1}^{-1}\cdots g_1^{-1} = g_n\), which is to say~\(C_n\).
\end{proof}

\begin{rem}(Pointing the knots)
    In the above proof, the role of the arcs \(\alpha_1, \dots, \alpha_{n-1}\) is not only to make the knots~\(K_1,\dots, K_{n-1}\) into pointed objects, so they represent actual elements of~\(\pi_1(M,p)\), but also to choose elements in the conjugacy classes~\(C_1, \dots, C_{n-1}\) whose product lies in~\(C_n^{-1}\). It is easy to construct examples of \(3\)-manifolds~\(M\) where (say) three knots \(K_1, K_2, K_3\) share a base-point, lie in conjugacy classes \(C_1, C_2, C_3\) that form a canceling multiset, but whose corresponding elements~\(x_1, x_2, x_3\in \pi_1(M)\) do not satisfy \(x_1x_2x_3=1\). One need only choose two non-commuting elements \(x,y\in \pi_1(M,p)\) and pointed knots~\(K_1, K_2, K_3\) representing the elements \(x,x^{-1},[y,x]\) respectively. 
\end{rem}

\subsection{Adjusting the boundary knots}\label{sec:adjusting}

We say that two knots \(K\), \(K'\) in~\(M\) differ by a \textbf{crossing change} if there exists an embedded closed \(3\)-ball \(B\subset M\) with \(\partial B\) intersecting~\(K\) transversely, such that \(B\cap K\) consists of two arcs in~\(B\) forming a trivial tangle, and \(K'\) is obtained from~\(K\) by modifying this tangle as illustrated in \Cref{fig:3d_crossing_change}. Up to isotopy, \(B\)~may be taken to be an arbitrarily small closed neighborhood of an arc~\(\alpha\) connecting the two strands of~\(K\) (and otherwise disjoint from~\(K\)). In this case, we say that $\alpha$ \textbf{guides} the crossing change.

\begin{figure}[h!]
    \centering
    \def\svgwidth{0.7\linewidth}
    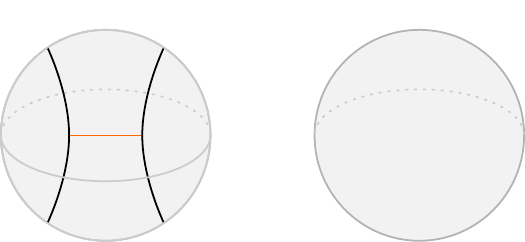
    \caption{A crossing change on a knot~\(K\subset M\), supported on the \(3\)-ball~\(B\), which may be taken to be an arbitrarily small closed neighborhood of the arc~\(\alpha\).}
    \label{fig:3d_crossing_change}
\end{figure}

If two knots in~\(M\) differ by a sequence of crossing changes, then they are clearly homotopic. The converse is also widely known to hold, though a precise proof is difficult to come by in the literature. We will use it as a black box:

\begin{lem}[Homotopy via crossing changes]\label{homotopy_crossing_changes}
    If two knots \(K\), \(K'\subset M\) are homotopic, then \(K'\)~may be obtained from~\(K\) by a finite sequence of crossing changes and isotopies.
\end{lem}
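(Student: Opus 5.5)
The plan is to pass from a homotopy to a generic smooth one and read off the crossing changes from its finitely many non-embedded moments. Parametrize \(K\) and \(K'\) as embeddings \(f_0, f_1\colon \Sp1\to \mathring M\). Since \(K\) and \(K'\) are homotopic as (oriented, free) loops, there is a homotopy \(F\colon \Sp1\times[0,1]\to M\) between them. After a small perturbation rel endpoints we take \(F\) smooth with image in \(\mathring M\); this uses collar pushing away from \(\partial M\) and Whitney approximation.

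The key step is a transversality argument in jet space. For a generic smooth one-parameter family \(f_t\colon \Sp1\to M\) we want three properties:
\begin{enumerate}
\item each \(f_t\) is an immersion;
\item \(f_t\) is an embedding for all but finitely many \(t_1,\dots,t_k\);
\item at each \(t_j\), \(f_{t_j}\) has exactly one transverse double point, and the family crosses this stratum transversally.
\end{enumerate}
The dimension counts are as follows. Failure of immersion is codimension \(3\) in the \(1\)-jet of a map \(\Sp1\to M^3\), so in a \(2\)-parameter domain \(\Sp1\times[0,1]\) it is avoided generically by the jet transversality theorem. Double points form, via the multijet transversality theorem applied to \((s,s',t)\mapsto (f_t(s),f_t(s'))\in M\times M\) off the diagonal, the preimage of the diagonal of codimension \(3\) in a \(3\)-dimensional domain. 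Hence they are isolated points \((s_j,s_j',t_j)\), and distinct ones can be taken at distinct times. Transversality to the diagonal means the two strands pass through each other with nonzero relative velocity, and the two tangent directions together with that velocity are independent.

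Between consecutive \(t_j\), the family \(f_t\) is a path of embeddings, hence a smooth isotopy; this follows by the isotopy extension theorem, or simply by the definition of link isotopy used in the paper. Near each \(t_j\), in a small ball \(B\) around the double point, we choose coordinates in which one strand is a fixed segment along the \(x\)-axis and the other is a segment parallel to the \(y\)-axis at height \(z=t-t_j\), up to a local isotopy. This local model uses the transversality conditions from the key step. Then \(f_{t_j-\epsilon}\) and \(f_{t_j+\epsilon}\) agree outside \(B\) and differ inside by exactly the tangle modification of \Cref{fig:3d_crossing_change}, guided by the vertical arc joining the strands. Concatenating gives \(K'\) from \(K\) by isotopies and \(k\) crossing changes.

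The main obstacle is making the local normal form at a double-point time rigorous: showing that a transverse passage through the diagonal is, after a local isotopy, exactly the standard crossing change. I would first handle it with a parametrized Morse-type lemma for the distance between the two strands. Alternatively, one can straighten the two strands to linear segments in a chart by an isotopy supported near \(B\), using that the tangent directions and the relative velocity are linearly independent.
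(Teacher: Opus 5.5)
The paper does not prove this lemma. It calls it folklore and uses it explicitly as a black box, so your proposal supplies an argument the paper lacks rather than competing with one. Your global strategy is the standard general-position proof, and it is sound.

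\begin{itemize}
\item The codimension counts are right. Non-immersion points have codimension \(3\) in the \(2\)-dimensional track. Double points are isolated in the \(3\)-dimensional \((s,s',t)\)-space. Triple points and two simultaneous double points have codimension \(6\) in \(4\)- and \(5\)-dimensional parameter spaces.
\item Between singular times the family is indeed an isotopy.
\item The level-preserving double-point condition is not literally the \(2\)-fold multijet of a single map. It does follow from multijet transversality for \(F\colon\Sp{1}\times[0,1]\to M\): use the submanifold of the \(2\)-fold \(0\)-multijet space where the two source \(t\)-coordinates agree and the two targets agree. This has codimension \(4\) in a \(4\)-dimensional source.
\item Finiteness of the double points uses that, since all \(f_t\) are immersions, compactness gives a uniform neighbourhood of the diagonal of \(\Sp{1}\times\Sp{1}\) with no double points.
\end{itemize}

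The gap is the local step you flagged, and as written it contains a false claim. For a generic family, \(f_{t_j-\epsilon}\) and \(f_{t_j+\epsilon}\) do not agree outside a small ball \(B\), because the whole curve moves with \(t\). No choice of coordinates or ``local isotopy'' near \(B\) fixes this.

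You first need a global normalization. Choose short arcs \(J\ni s_j\) and \(J'\ni s_j'\) in \(\Sp{1}\). For \(t\) near \(t_j\), the restriction of \(f_t\) to \(\Sp{1}\setminus(J\cup J')\) is an isotopy of arcs. Isotopy extension then gives an ambient isotopy \(\Phi_t\) with \(\Phi_{t_j}=\mathrm{id}\) such that \(h_t=\Phi_t^{-1}\circ f_t\) is stationary off \(J\cup J'\). This preserves your transversality data: at the double point, \(\partial_t\Phi_t^{-1}\) adds the same vector to the velocities of both strands, so the relative velocity \(w\) is unchanged.

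Second, you must still identify the tangles at \(t_j\pm\epsilon\) with the standard ones. A Morse-type statement about the distance only says the strands meet once and separate. ``Straightening'' needs an argument that the straightening never passes one strand through the other, which is exactly the missing content.

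A clean way to supply it is interpolation in a chart containing \(h_t(J\cup J')\).
\begin{itemize}
\item Let \(k_t\) agree with \(h_{t_j}\), except that near \(s_j'\) it is translated by \(-(t-t_j)\lambda(s')\,w\). Here \(\lambda\) is a bump function supported in \(J'\) and equal to \(1\) near \(s_j'\).
\item Put \(H^\sigma_t=(1-\sigma)h_t+\sigma k_t\) for \(\sigma\in[0,1]\).
\item Every \(H^\sigma\) has its double point at \((s_j,s_j',t_j)\) with the same tangent vectors and the same relative velocity \(w\).
\item Apply the inverse function theorem to \((s,s',t)\mapsto H^\sigma_t(s)-H^\sigma_t(s')\), uniformly in \(\sigma\), and combine with compactness. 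This shows \(H^\sigma_t\) is an embedding for all \(\sigma\) whenever \(0<|t-t_j|\le\epsilon\).
\item Hence \(h_{t_j\pm\epsilon}\) is isotopic to \(k_{t_j\pm\epsilon}\). These two visibly differ by the crossing change guided by a short segment parallel to \(w\).
\end{itemize}
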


\begin{prop}[Modifying boundary knots]\label{change_the_boundary_knots}
    Let \(S\subset M\) be an embedded oriented compact connected surface with \(n\ge 3\) boundary components \(K_1, \dots, K_n\). For any knots \(K_1', \dots, K_n'\subset M\) with each \(K_i'\)~homotopic to~\(K_i\), there is an embedded oriented surface \(S'\subset M\) diffeomorphic to~\(S\), whose boundary components are isotopic to \(K_1', \dots, K_n'\).
\end{prop}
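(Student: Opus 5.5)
By \Cref{homotopy_crossing_changes}, each \(K_i'\) is obtained from \(K_i\) by finitely many crossing changes and isotopies. It therefore suffices to handle a single crossing change on a single boundary component, say on \(K_1\): if \(K_1^*\) differs from \(K_1\) by one crossing change, we want an embedded surface \(S^*\) diffeomorphic to \(S\) whose boundary components are isotopic to \(K_1^*, K_2, \dots, K_n\). Isotopies of a boundary component extend to ambient isotopies of \(M\), which carry \(S\) along, so they cost nothing. Induction on the total number of crossing changes then gives the proposition.

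To realize one crossing change, let \(\alpha\) be an arc guiding it, with endpoints on \(K_1\). After a small perturbation, \(\alpha\) is transverse to \(S\) and meets \(S\) only at finitely many interior points plus its endpoints. The idea is to perform the crossing change inside a thin ball around \(\alpha\) without \(S\) getting in the way. To do this, first push \(S\) off \(\alpha\) near \(K_1\) and then route the boundary strands through the ball.

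The cleanest mechanism, and the one I would try first, uses the other boundary components. A crossing change on \(K_1\) is equivalent to a band-type move: one strand of \(K_1\) passes through the other. Here is how to arrange this. Take a small collar neighbourhood of \(K_1\) in \(S\). Use a finger move to drag a small subarc of \(K_1\), together with a thin strip of \(S\), along \(\alpha\) until it reaches the other strand. The trouble is that the dragged strip of \(S\) must pass \emph{through} the other strand of \(K_1\) (and its collar), which would create intersections.

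Here \(n\ge 3\) is used. Cut \(S\) along an arc \(\beta\) running from \(K_1\) to some \(K_j\) with \(j\ne 1\). Equivalently, view a neighbourhood of \(K_1\cup\beta\cup K_j\) in \(S\) as a pair of pants whose third boundary is a curve \(c\) separating \(K_1\) and \(K_j\) from the rest of \(S\) (possible since \(n\ge 3\), and any genus lies on the other side). Now isotope the pair of pants. A crossing change of \(K_1\) with itself can be realized by sliding a piece of \(K_1\) around through the pants, compensated by a twist of the band joining \(K_1\) to \(K_j\). Concretely, a full twist of \(K_1\) around a strand changes crossings between \(K_1\) and the band. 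Re-embed the pants so that \(K_1\) becomes \(K_1^*\), while \(K_j\) changes only by isotopy, by a "sliding through" trick where the obstruction is absorbed into the third boundary \(c\). Alternatively, one can reduce to a local model: inside a ball, exhibit an explicit genus-zero surface with three boundary components whose first boundary curve realizes the crossing change while the other two are unchanged up to isotopy. Then glue it in.

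The main obstacle is this local model: we must verify that the modified \(S^*\) is embedded, has the same topological type, and that the remaining boundary components are unchanged up to isotopy, in particular that \(\alpha\)'s intersections with the interior of \(S\) can be removed. I would handle the interior intersections by tubing \(\alpha\) around \(S\): push each intersection point of \(\alpha\) with \(S\) off along a path in \(S\) to the boundary \(K_1\) and then over it. This changes \(\alpha\) only by isotopy rel \(K_1\) modulo crossings of \(\alpha\) with \(K_1\), which are allowed since \(\alpha\) is merely a guide. Once \(\alpha\) meets \(S\) only in its endpoints, the crossing change is realized on \(S\) by twisting a thin band of \(S\) adjacent to \(K_1\) near one endpoint, using the third boundary component to keep genus zero and connectedness.
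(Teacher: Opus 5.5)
The reduction to a single crossing change on one component is fine and matches the paper. The heart of the proof is missing, though: you never actually perform that crossing change on an embedded copy of $S$ while keeping the knot types of the other components. The mechanisms you sketch do not do this.

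\begin{itemize}
\item Twisting a band of $S$ that joins $K_1$ to $K_j$ changes only crossings between $K_1$ and $K_j$. It alters $\lk(K_1,K_j)$ but neither knot type; this is exactly the move in \Cref{adjust_linking}.
\item The curve $c$ lies in the interior of $S$, so nothing can be ``absorbed'' into it without moving the rest of $S$.
\item A three-holed sphere inside a ball cannot be glued into $S$, since $K_1$ is not contained in that ball.
\item A twist supported near one endpoint of $\alpha$ cannot affect the crossing between the strands at the two endpoints.
\end{itemize}

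Your idea of routing strands through the ball points the right way, but one auxiliary component is not enough. To carry $S$ through the crossing, both sheets of $S\cap B$ at the ends of $\alpha$ must be strips that pass through each other as a whole. If one end is still a half-disc of $S$ meeting $\partial B$, pushing the other end across its edge forces an intersection with it. If both strips have their second edge on the same $K_j$, the move includes a self-crossing change of $K_j$.

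The missing idea is the paper's.
\begin{itemize}
\item Once $\alpha\cap S=\partial\alpha$, take a thin ball $B\supset\alpha$ with $B\cap S$ a pair of discs.
\item Choose \emph{disjoint} arcs in $S\setminus\mathring B$ from these discs to two \emph{distinct} further components $K_2,K_3$. This is possible because an arc joining distinct boundary components is non-separating.
\item Drag $K_2,K_3$ along these arcs into $B$, so that $S\cap B$ becomes one strip with edges on $K_1,K_2$ and one with edges on $K_1,K_3$.
\item Perform the crossing change carrying both strips along.
\end{itemize}
The crossings that change are $K_1$--$K_1$ (the desired one), $K_1$--$K_2$, $K_1$--$K_3$ and $K_2$--$K_3$. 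So $K_2$ and $K_3$ keep their knot types, and the surface stays embedded and abstractly unchanged. This is precisely where $n\ge 3$ enters.

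Your removal of the interior intersections of $\alpha$ with $S$ also contains an error: pushing them off over $K_1$ is not harmless. The knot produced by a crossing change guided by $\alpha$ depends on the position of $\alpha$ relative to $K_1$, that is, on its isotopy class in the complement of $K_1$ rel endpoints. After passing $\alpha$ through $K_1$ you are in general performing a different crossing change, and you need not arrive at $K_1^*$, which breaks the induction.

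The fix is to push the intersection points off over a \emph{different} component $K_j$, which does not affect the knot type of $K_1$. Equivalently, as in the paper, keep $\alpha$ fixed and isotope $S$ by dragging $K_j$ along an arc of $S$ starting at the intersection point; this only needs $n\ge 2$.

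Finally, $S$ may have any genus, so ``keeping genus zero'' is not the relevant constraint. What must be preserved is the diffeomorphism type of $S$, and the strip move preserves it automatically.
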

\begin{proof}
    By \Cref{homotopy_crossing_changes}, it suffices to show that given any component~\(K\) of~\(\partial S\) and any knot~\(K'\subset M\) that differs from~\(K\) by one crossing change, it is possible to modify~\(S\) to a diffeomorphic surface~\(S'\) in such a way that \(K\)~is changed into a knot isotopic to~\(K'\), and all other knots in \(\partial S\) have their isotopy types unchanged. 
    By re-indexing the boundary components if necessary, we assume \(K=K_n\), and consider an embedded arc \(\alpha \subset M\) guiding a crossing change from~\(K_n\) to~\(K'\); in particular \(\alpha \cap K_n = \partial \alpha\). After a small isotopy of~\(\alpha\) relative endpoints, we may assume that \(\alpha\)~intersects~\(S\) transversely (in particular, \(\alpha \cap S\) consists of finitely many points), and that \(\alpha \cap \partial S =\partial \alpha\).
    
    Our first goal is to modify~\(S\) by an isotopy relative to~\(K_n\), to ensure \(\alpha \cap S = \partial \alpha\). So suppose \(p\in S\cap \alpha\) is an interior point of~\(\alpha\). We choose an embedded arc~\(\beta\subset S\) from~\(p\) to any point of~\(K_1\) and otherwise disjoint from~\(\partial S\) (note that \(K_1 \neq K_n\) since \(n\ge 2\)). Then we isotope \(S\) by dragging \(K_1\) along~\(\beta\), thus obtaining a surface with the same diffeomorphism type as~\(S\), and having fewer intersections with~\(\alpha\). This is illustrated in \Cref{fig:beta}. Applying this technique repeatedly, we remove all intersections of~\(\alpha\) with the interior of~\(S\).
    
    \begin{figure}[h]
        \centering
        \def\svgwidth{\linewidth}
        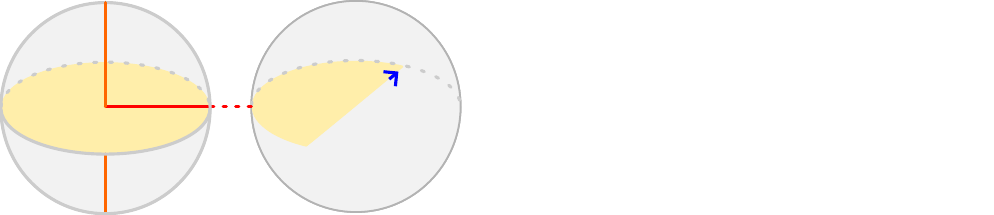
        \caption{Dragging \(K_1\) along~\(\beta\) to reduce the number of points in~\(\alpha\cap S\).}
        \label{fig:beta}
    \end{figure}

    Once the setup has been arranged so that \(\alpha\)~meets~\(S\) only at its endpoints, we choose a closed neighborhood~\(B\) of~\(\alpha\) diffeomorphic to a \(3\)-ball, within which the crossing change from~\(K_n\) to~\(K'\) will be executed. By taking \(B\)~small enough, we may assume that \(B \cap S\) is a pair of discs as illustrated in \Cref{fig:two_discs}. For each of these discs, we choose a point in its common boundary with~\(B\) (but disjoint from~\(K_n\)), and we call these points \(p_1\), \(p_2\).

    \begin{figure}[h!]
        \centering
        \def\svgwidth{0.4\linewidth}
        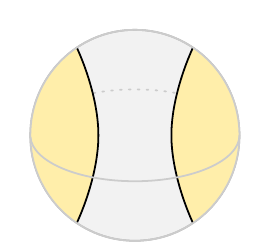
        \caption{The \(3\)-ball~\(B\) on which a crossing change from~\(K_n\) to \(K'\) will be executed.}
        \label{fig:two_discs}
    \end{figure}

    \begin{figure}[!b]
        \centering
        \def\svgwidth{\linewidth}
        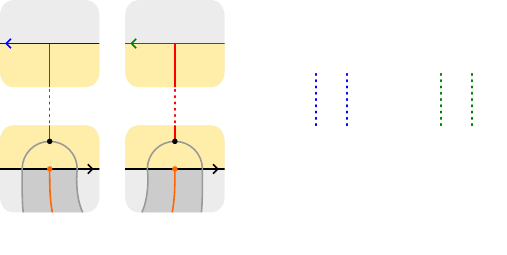
        \caption{Isotoping~\(S\) by pulling a strand of~\(K_1\) and one of~\(K_2\) into~\(B\) to turn the intersection \(S\cap B\) into a pair of strips.}
        \label{fig:create_strips}
    \end{figure}

    Consider the surface \(S_-\coloneq S\setminus \mathring B\).
    For each \(i\in \{1,2\}\), we choose a properly embedded arc \(\beta_i \subset S_-\) connecting~\(p_i\) to~\(K_i\).
    Since an arc connecting distinct boundary components of a surface is always non-separating, we may take \(\beta_1\)~and~\(\beta_2\) to be disjoint. Here we used that $n\geq 3$, to make sure that the boundary components of $S_-$ corresponding to $K_1, K_2, K_n$ are distinct. We isotope~\(S\) relative to~\(K_n\) by dragging each~\(K_i\) along~\(\beta_i\) until it intersects~\(B\) at a properly embedded arc, so \(S\cap B\) consists of a pair of bands. This isotopy is illustrated in \Cref{fig:create_strips}.

    Once the intersection of \(B\) with~\(S\) consists of two strips, we carry out the crossing change from~\(K_n\) to~\(K'\) bringing along the nearby strands of~\(K_1\) and~\(K_2\), see \Cref{fig:change_the_crossing}. This does not change the diffeomorphism type of~\(S\), nor the isotopy types of \(K_1\) nor \(K_2\) (though it may change the isotopy type of the link \(K_1 \cup K_2\)). All other boundary components of~\(S\) are unaffected. 
    \begin{figure}[h!]
        \centering
        \def\svgwidth{0.9\linewidth}
        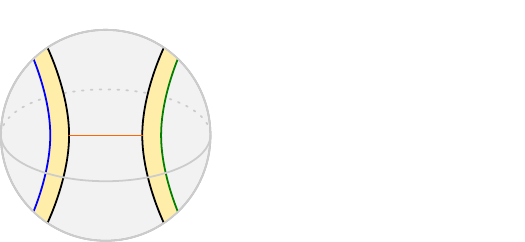
        \caption{Executing the crossing change from \(K_n\) to~\(K'\) while bringing along the two strips of~\(S\). The diffeomorphism type of~\(S\) is unchanged, as are the isotopy types of the knots \(K_1\) and \(K_2\).}
        \label{fig:change_the_crossing}
    \end{figure}
\end{proof}

\Cref{fig:trefoil_unknot_unknot} exemplifies how to start with a pair of pants in~\(\Sph^3\) with unknots as boundary, and modify it so that one of the boundary components becomes a trefoil.

\begin{figure}[h!]
        \centering
        \def\svgwidth{0.5\linewidth}
\begingroup%
  \makeatletter%
  \providecommand\color[2][]{%
    \errmessage{(Inkscape) Color is used for the text in Inkscape, but the package 'color.sty' is not loaded}%
    \renewcommand\color[2][]{}%
  }%
  \providecommand\transparent[1]{%
    \errmessage{(Inkscape) Transparency is used (non-zero) for the text in Inkscape, but the package 'transparent.sty' is not loaded}%
    \renewcommand\transparent[1]{}%
  }%
  \providecommand\rotatebox[2]{#2}%
  \newcommand*\fsize{\dimexpr\f@size pt\relax}%
  \newcommand*\lineheight[1]{\fontsize{\fsize}{#1\fsize}\selectfont}%
  \ifx\svgwidth\undefined%
    \setlength{\unitlength}{159.95105413bp}%
    \ifx\svgscale\undefined%
      \relax%
    \else%
      \setlength{\unitlength}{\unitlength * \real{\svgscale}}%
    \fi%
  \else%
    \setlength{\unitlength}{\svgwidth}%
  \fi%
  \global\let\svgwidth\undefined%
  \global\let\svgscale\undefined%
  \makeatother%
  \begin{picture}(1,0.8235929)%
    \lineheight{1}%
    \setlength\tabcolsep{0pt}%
    \put(0,0){\includegraphics[width=\unitlength,page=1]{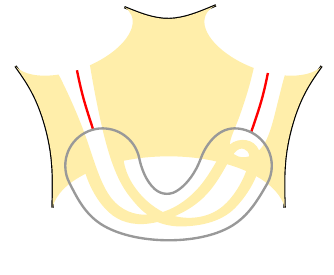}}%
    \put(0.8141561,0.63182032){\color[rgb]{1,0,0}\makebox(0,0)[t]{\lineheight{0.69999999}\smash{\begin{tabular}[t]{c}$\beta_2$\end{tabular}}}}%
    \put(0.47189017,0.11326683){\color[rgb]{1,0.4,0}\makebox(0,0)[t]{\lineheight{0.69999999}\smash{\begin{tabular}[t]{c}$\alpha$\end{tabular}}}}%
    \put(0.9443984,0.77222394){\color[rgb]{0,0.50196078,0}\makebox(0,0)[t]{\lineheight{0.69999999}\smash{\begin{tabular}[t]{c}$K_2$\end{tabular}}}}%
    \put(0,0){\includegraphics[width=\unitlength,page=2]{trefoil_unknot_unknot.pdf}}%
    \put(0.22774789,0.64353471){\color[rgb]{1,0,0}\makebox(0,0)[t]{\lineheight{0.69999999}\smash{\begin{tabular}[t]{c}$\beta_1$\end{tabular}}}}%
    \put(0.0558443,0.78160169){\color[rgb]{0,0,1}\makebox(0,0)[t]{\lineheight{0.69999999}\smash{\begin{tabular}[t]{c}$K_1$\end{tabular}}}}%
    \put(0.10273483,0.10873993){\color[rgb]{0,0,0}\makebox(0,0)[t]{\lineheight{0.69999999}\smash{\begin{tabular}[t]{c}$K_n$\end{tabular}}}}%
  \end{picture}%
\endgroup%

        \caption{Executing a crossing change on a boundary knot of a pair of pants in~\(\Sph^3\).}
        \label{fig:trefoil_unknot_unknot}
\end{figure}

\begin{thm}
[Genus-zero links of prescribed knots, \Cref{main_intro}]\label{main}
 Fix \(n\ge 3\) knots \(K_1, \dots, K_n\) in a connected \(3\)-manifold~\(M\), and for each~\(i\le n\) let \(C_i\subseteq\pi_1(M)\) be the conjugacy class represented by~\(K_i\). The following are equivalent:
    \begin{enumerate}
        \item The conjugacy classes \(C_1, \dots, C_n\)~form a canceling multiset.
        \item  There is an \(n\)-component genus-zero link whose components are isotopic to \(K_1 , \ldots , K_n\).
    \end{enumerate}  
\end{thm}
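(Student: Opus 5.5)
The plan is to assemble the theorem directly from \Cref{topological-cancellation}, \Cref{starting_surface} and \Cref{change_the_boundary_knots}, handling the two implications separately.

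For \((2\Rightarrow 1)\), let \(L\) be a genus-zero link whose components \(K_1',\dots,K_n'\) are isotopic to \(K_1,\dots,K_n\). Let \(S\) be a genus-zero Seifert surface for \(L\). Then \(S\) is an oriented compact connected genus-zero surface with \(n\) boundary components, so it is diffeomorphic to \(\Sigma_n\). Composing such a diffeomorphism with the inclusion \(S\hookrightarrow M\) gives a continuous map \(f\colon\Sigma_n\to M\). Under \(f\), the oriented boundary loops \(\gamma_i\) go to the components \(K_i'\), with their orientations, because \(\partial S=L\) as oriented manifolds. Isotopic knots are in particular freely homotopic, so \(f(\gamma_i)\) represents \(C_i\). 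The implication \((2\Rightarrow1)\) of \Cref{topological-cancellation}, applied with \(X=M\), then shows that \(C_1,\dots,C_n\) is canceling.

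For \((1\Rightarrow 2)\), start from the canceling multiset \(C_1,\dots,C_n\). \Cref{starting_surface} provides an oriented embedded genus-zero surface \(S_0\subset M\) with \(n\) boundary components, each representing one of the \(C_i\). Since the multiset is matched bijectively, we can label the boundary components \(J_1,\dots,J_n\) so that \(J_i\) represents \(C_i\). Then \(J_i\) and \(K_i\) represent the same conjugacy class. Conjugacy classes in \(\pi_1(M)\) correspond bijectively to free homotopy classes of loops, so \(J_i\) is homotopic to \(K_i\). Since \(n\ge3\), \Cref{change_the_boundary_knots} applies and yields an embedded oriented surface \(S'\cong S_0\) whose boundary components are isotopic to \(K_1,\dots,K_n\). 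Its boundary \(L=\partial S'\), with the boundary orientation, is an \(n\)-component link with \(S'\) as a connected genus-zero Seifert surface. Hence \(\g(L)=0\), since genus is nonnegative.

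The main obstacle is bookkeeping rather than new ideas, and it has three parts.
\begin{itemize}
\item \emph{Orientations.} The components produced must be isotopic to \(K_i\) as oriented knots. This holds because \Cref{change_the_boundary_knots} works with oriented homotopy classes and the boundary orientation of \(S'\).
\item \emph{Position of the surface.} The surface must lie in \(\mathring M\). We can arrange this by a small isotopy pushing it off \(\partial M\) beforehand.
\item \emph{Boundary components are genuine knots.} Each boundary component of \(S_0\) must be an embedded knot before \Cref{homotopy_crossing_changes} is invoked inside \Cref{change_the_boundary_knots}. This holds by construction.
\end{itemize}
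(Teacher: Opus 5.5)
Your proposal is correct and is essentially the paper's proof: \((2\Rightarrow 1)\) follows from \Cref{topological-cancellation} applied to a genus-zero Seifert surface, and \((1\Rightarrow 2)\) first builds a genus-zero surface whose boundary lies in the right free homotopy classes (\Cref{starting_surface}) and then corrects the isotopy types with \Cref{change_the_boundary_knots}. Your extra bookkeeping (an orientation-preserving identification of \(S\) with \(\Sigma_n\), matching the labels, and the correspondence between free homotopy classes and conjugacy classes) only makes explicit what the paper leaves implicit.
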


\begin{proof}
    The implication \((2 \Rightarrow 1)\) is a special case of \Cref{topological-cancellation}.

    For the converse, we begin by using \Cref{starting_surface} to construct an embedded genus-zero surface \(S_0 \subset M\) with \(n\)~boundary components representing the classes \(C_1, \dots, C_n\). In other words, the boundary components of~\(S_0\) are homotopic to the given~\(K_i\). We then apply \Cref{change_the_boundary_knots} to modify~\(S_0\) to a surface~\(S\) whose boundary components are isotopic to~\(K_1, \dots, K_n\). The boundary \(\partial S\) is thus the desired link.
\end{proof}

\begin{rem}[Band surgery]\label{rem:band_surgery}
    The case \(n=3\) of \Cref{main} may be restated in terms of (oriented) band surgery. It tells us that for every three knots \(K_1, K_2, K_3\) in~\(M\) whose conjugacy classes \(C_1, C_2, C_3 \subseteq \pi_1(M)\) form a canceling multiset, there are knots \(K_1'\),~\(K_2'\) isotopic to \(K_1\),~\(K_2\), and a band surgery between them that produces~\(K_3\rev\).

    For \(M=\Sph^3\), a special case is the statement that every knot is a band surgery of two unknots. This fact lies at the core of the magic trick explored by the authors in a different article \cite{AQ26}, and was indeed the observation that prompted the writing of this one. 
\end{rem}

\section{Linking and framing numbers}\label{sec:linking}

 When applying \Cref{main} to construct a genus-zero link $L$ of $n \geq 3$ prescribed knot types, the components of $L$ are in general intricately linked. In fact, if any component~\(K\) of~\(L\) can be separated from the others by a \(2\)-sphere~\(\Sigma\subset M\), then \(K\)~must be an unknot (this is seen by starting with a genus-zero Seifert surface~\(S\) of~\(L\) transverse to~\(\Sigma\), and capping off~\(S\) on both sides of its intersections with~\(\Sigma\), starting with the ones that are innermost in~\(\Sigma\)). 
However, restricting to the case where \(M=\Sph^3\), we will see that the components of~\(L\) may be chosen with pairwise vanishing linking numbers. The goal of this section is to prove a more refined version of this assertion.

Besides linking numbers, we will also consider the ``self-linking numbers'' induced from a Seifert surface~\(S\) for~\(L\) on each link component~\(K\).
By this we mean the \textbf{framing number}
\[ \operatorname{fr}_S(K) \coloneq \lk(K,K'),\]
where \(K'\)~is a push-off of~\(K\) into~\(S\), and \(\lk\)~denotes the linking number.
It turns out that once \(L\)~is fixed, these framing numbers are independent of~\(S\):

\begin{prop}[Linking numbers determine framings]\label{linking-framing-obstruction}
    Let \(S\) be an oriented compact connected surface smoothly embedded in~\(\Sph^3\), and let \(K_1, \dots, K_n\) be the components of~\(L\coloneq\partial S\). Then for every \(i\in\{1, \dots, n\}\) we have
    \[\fr_S(K_i) = -\!\!\sum_{\substack{1\le j\le n\\ j\neq i}} \lk(K_i, K_j).\]
\end{prop}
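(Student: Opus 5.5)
The plan is to compute a single linking number in two ways. Push the whole link \(L\) off \(S\) in the positive normal direction, giving a parallel copy \(L^+=K_1^+\cup\dots\cup K_n^+\) disjoint from~\(S\). Since \(S\) is an oriented compact surface with \(\partial S = L\), the linking number of any knot \(J\) disjoint from \(S\) with \(L\) equals the algebraic intersection number of \(J\) with~\(S\), and hence vanishes. Applying this to \(J=K_i^+\) gives
\[0=\lk(K_i^+,L)=\sum_{j=1}^n \lk(K_i^+,K_j).\]

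Next I would identify the terms of this sum. For \(j\neq i\), the knot \(K_i^+\) is isotopic to \(K_i\) in the complement of~\(K_j\): the push-off is along a short normal segment, and for a small enough push this segment stays away from~\(K_j\). Hence \(\lk(K_i^+,K_j)=\lk(K_i,K_j)\).

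For \(j=i\), the claim is that \(K_i^+\) is isotopic, in the complement of~\(K_i\), to a push-off of \(K_i\) into~\(S\). Indeed, near \(K_i\) the surface looks like a half-open collar \(K_i\times[0,\varepsilon)\). The normal push-off of \(K_i\) and the inward push-off within \(S\) are both sections of the same framing of \(K_i\), namely the one determined by the tangent plane of~\(S\) along~\(K_i\). They differ by a rotation within the normal circle that never meets~\(K_i\). Therefore \(\lk(K_i^+,K_i)=\fr_S(K_i)\). Substituting both identifications into the displayed equation and rearranging gives the formula of the proposition.

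I expect the main obstacle to be this last framing comparison, which needs some care. The normal push-off is most naturally defined on \(S\) itself, and at the boundary one must check that the normal direction together with the inward direction rotate compatibly. The argument I would give is that the linking number depends only on the homotopy class of the nonvanishing section of the normal bundle of \(K_i\). The normal vector \(\nu\) to \(S\) and the inward tangent vector \(\tau\) of \(S\) are everywhere linearly independent and orthogonal to~\(K_i\). So the straight-line homotopy \(\cos t\,\nu+\sin t\,\tau\), \(t\in[0,\pi/2]\), is nonvanishing throughout and connects the two sections. The vanishing \(\lk(J,L)=J\cdot S\) for \(J\cap S=\emptyset\) is standard: it is the Seifert-surface definition of linking number, extended by additivity to the multi-component boundary~\(L\).
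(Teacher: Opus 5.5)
Your argument is correct. It rests on the same principle as the paper's proof: a closed curve disjoint from an oriented surface bounded by a link has total linking number zero with that link. The difference is which object gets moved.

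\emph{The paper's arrangement.} The paper keeps \(K_i\) fixed and shrinks the surface. It truncates \(S\) to \(S'=S\cap X_L\), whose boundary is exactly the union of the push-offs \(K_j'\) of the components into \(S\). Reading off the \([\mu_i]\)-coefficient of \([\partial S']=0\) in \(\h_1(X_L)\) then gives \(\sum_j \lk(K_i,K_j')=0\).

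\emph{Your arrangement.} You keep \(S\) fixed and move a single component off it along the normal. From \(K_i^+\cdot S=0\) you get \(\sum_j\lk(K_i^+,K_j)=0\).

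\emph{What each buys.} In the paper's version, the curve \(K_i'\) is literally the in-surface push-off that defines \(\fr_S(K_i)\), so no comparison of framings is needed. Yours needs the extra step identifying \(\lk(K_i^+,K_i)\) with \(\fr_S(K_i)\). You handle it correctly: the nonvanishing family \(\cos t\,\nu+\sin t\,\tau\) gives an isotopy of small push-offs in the complement of \(K_i\). In exchange, your route avoids the link exterior and its meridian basis, using only the intersection-number description of linking numbers. Both proofs use the same small-push-off observation to replace the push-off by the original knot in the terms with \(j\neq i\).
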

\begin{proof}
    Construct the exterior~\(X_L\) of~\(L\) as the complement of an  open tubular neighborhood of~\(L\) in~\(\Sph^3\) such that the intersection \(S \cap \partial X_L\) is transverse and consists of curves~\(K_i'\) parallel to the~\(K_i\). We denote by~\(S'\) the ``truncated'' Seifert surface \(S\cap X_L\), and by \(\mu_i \subseteq \partial X_L\) a meridian of each~\(K_i\).
    
    Recall that the homology classes~\([\mu_i]\) form a \(\ZZ\)-basis of~\(\h_1(X_L) \cong \ZZ^n\), and that the coordinates of a knot \(K\subset X_L\) in this basis are given by \([K] = \sum_{i=1}^n \lk(K_i,K)[\mu_i]\). Hence, in~\(\h_1(X_L)\),
    \[0 = [\partial S'] = \sum_{j=1}^n [K'_j] = \!\! \sum_{\substack{1\le i \le n\\1\le j \le n}}\lk(K_i,K'_j) [\mu_i].\]
    We thus conclude that for every $i\in\{1,\ldots,n\}$, 
    \[\sum_{j=1}^n \lk(K_i,K'_j)= 0.\]
    Replacing $K'_j$ with $K_j$ whenever $j\neq i$ yields the desired formula. 
\end{proof}

The following result tells us that every link may be adjusted to have any combination of linking numbers, without changing the knot types of its components, nor the Seifert genus. 

\begin{prop}[Modifying linking numbers]\label{adjust_linking}
    Let \(S_0\)~be an oriented compact connected surface embedded in~\(\Sph^3\) with boundary components \(K_1, \dots, K_n\), and for each distinct \(i,j\in \{1, \dots, n\}\) let \(\ell_{ij}\) be an integer, such that \(\ell_{ij} = \ell_{ji}\).
    Then there is an embedded surface \(S\subset \Sph^3\) diffeomorphic to~\(S_0\) with boundary components \(K_1', \dots, K_n'\), such that for all distinct \(i,j\in \{1, \dots,n\}\),
    \begin{enumerate}
        \item the knot~\(K_i'\) is isotopic to~\(K_i\), and
        \item \(\lk(K_i', K_j') = \ell_{ij}\).
    \end{enumerate}
\end{prop}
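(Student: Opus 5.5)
We change the linking numbers one pair at a time, by a local move on two boundary components that keeps the diffeomorphism type of the surface and the isotopy type of each component. The basic move is a \emph{crossing change between two different components}. It is done exactly as in the proof of \Cref{change_the_boundary_knots}. Fix distinct \(i\ne j\), and choose an embedded arc \(\alpha\subset\Sph^3\) from a point of \(K_i\) to a point of \(K_j\). Perturb \(\alpha\) so that it meets \(S_0\) transversely, with \(\alpha\cap\partial S_0=\partial\alpha\).

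First, remove the interior intersection points of \(\alpha\) with \(S_0\). For each such point, choose an arc \(\beta\subset S_0\) to some boundary component \(K_k\), and drag \(K_k\) along \(\beta\). Here \(k\) may be taken to be \(i\) or \(j\), or any other component. This is an isotopy of the surface, so nothing we care about changes.

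Second, take a small ball \(B\) around \(\alpha\). Then \(B\cap S_0\) is two discs, one near \(K_i\) and one near \(K_j\). A crossing change guided by \(\alpha\) now exchanges a strand of \(K_i\) with a strand of \(K_j\). This changes \(\lk(K_i,K_j)\) by \(\pm1\), and the sign is chosen by the direction of the crossing change. To preserve the surface's diffeomorphism type, turn \(B\cap S\) into two strips, as in \Cref{fig:create_strips}. We need the two discs to become strips whose cores are arcs with both ends on boundary components other than the ones being crossed, and we then carry those strips along the crossing change, as in \Cref{fig:change_the_crossing}.

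The obstacle is that the strips carry boundary arcs of the surface, so the crossing change also changes linking numbers of those other components. There is a simpler option than finding spare components. Take the strip near \(K_i\) to be bounded by \(K_i\) itself, namely a thin collar strip along \(K_i\) whose other side is the interior of \(S\). The crossing change of \(K_i\) with \(K_j\) then drags a whole band of \(S\) through a band of \(S\). A cleaner description of this is needed, so I would instead use the following direct construction.

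Take a small arc of \(K_i\) and a small arc of \(K_j\). Perform a finger move of a small collar band of \(S\) at \(K_i\) around a collar band at \(K_j\), so that the boundary arc of \(K_i\) links once around the band at \(K_j\). Concretely, it twists a full time around it, a "clasp of bands". Because a band encircling another band crosses it twice, each of the two boundary arcs passing once each way, a naive clasp adds \(0\) to the linking number. So use the alternative: take an arc \(\alpha\) from \(K_i\) to \(K_j\) disjoint from the interior of \(S\), and apply a \emph{full twist} of the two bands' boundary arcs around each other. That is, replace the two parallel strips in \(B\) by strips whose boundary arcs from \(K_i\) and \(K_j\) wind around each other, which is a braid-type move \(\sigma^2\). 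Only the two arcs in each strip that are adjacent to each other twist, as in the second step of \Cref{fig:change_the_crossing}.

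Accounting correctly, the strip near \(K_i\) has both edges on \(K_i\), and similarly for \(K_j\), when we drag \(K_i\) and \(K_j\) themselves into \(B\). This is possible by the arc argument with \(\beta_1\) ending on \(K_i\), \(\beta_2\) on \(K_j\); there are distinct boundary components because \(S_-\) has the extra boundary \(\partial B\) pieces. A full twist of strip around strip changes \(\lk(K_i,K_j)\) by \(\pm 2\cdot\) (number of edge pairs)\(/2\), namely by a multiple of \(4\). It preserves the component knot types, since each component unknots from the other locally, and it preserves the surface type. The main obstacle is getting an odd change. I would handle it by choosing the strip near \(K_i\) with only one edge on \(K_i\) and the other edge on \(K_j\), via a single boundary-connecting arc \(\beta\) from \(K_i\) to \(K_j\) in \(S_-\). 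The one-crossing change then shifts \(\lk(K_i,K_j)\) by \(\pm1\) and affects no other pairs. Iterating over all pairs \(i<j\) gives \(\lk(K_i',K_j')=\ell_{ij}\), since each move touches only one pair.
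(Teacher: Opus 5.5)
There is a genuine gap. The move you finally settle on is never pinned down, and within the framework you set up it cannot work. Your crossing change is guided by an arc \(\alpha\) outside \(S\), so the two strands being exchanged lie on different pieces of \(S\cap B\). To keep \(S\) embedded, one piece must be carried entirely through the other. If the piece near \(K_j\) is left a disc, this is impossible: the crossing change alters by \(\pm1\) the linking of the strip's core, closed up along \(\partial B\), with the boundary of that disc, so the strip would have to meet it.

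A pass of one strip through another produces one crossing change between each edge of the first and each edge of the second. The two edges of a strip of an oriented surface are antiparallel, so these four changes come with signs \(\epsilon,-\epsilon,-\epsilon,\epsilon\). This has two consequences for your proposal:
\begin{itemize}
\item Your ``full twist of strip around strip'' with a \(K_i\)--\(K_i\) strip and a \(K_j\)--\(K_j\) strip changes \(\lk(K_i,K_j)\) by \(0\), not by a nonzero multiple of \(4\). This is the same cancellation you correctly noticed for the clasp.
\item In your final version, take the other strip to have both edges on \(K_j\), as in your previous step. Passing a \(K_i\)--\(K_j\) strip through it again leaves \(\lk(K_i,K_j)\) unchanged, and it subjects \(K_j\) to self-crossing changes that may alter its knot type.
\end{itemize}

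No choice of dragged edges rescues this. Suppose no component has edges on both pieces. Then each push-off of a component stays in the same piece as the component, so every framing \(\fr_S(K_k)\) is preserved. By \Cref{linking-framing-obstruction}, every sum \(\sum_{j\neq k}\lk(K_k,K_j)\) is then preserved too. But changing \(\lk(K_i,K_j)\) alone changes the \(i\)-th sum, and the statement puts no constraint on these sums. So your closing claim, that the one-crossing change shifts \(\lk(K_i,K_j)\) by \(\pm1\) and affects no other pairs, cannot hold for any move of this kind.

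The missing idea is that the move must change the framings of \(K_i\) and \(K_j\). That means twisting the surface, not passing it through itself. Your last sentence does name the right object: a thin band of \(S\) whose two free edges are arcs of \(K_i\) and of \(K_j\). Your dragging along \(\beta\) produces exactly such a band, and so does a neighborhood of a properly embedded arc in \(S\) from \(K_i\) to \(K_j\).

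The paper's proof adds \(m=\ell_{ij}-\lk(K_i,K_j)\) full twists to such a band about its own core. This works for the following reasons:
\begin{itemize}
\item Abstractly, the band is removed and reattached along the same two arcs, so the diffeomorphism type of the surface is unchanged.
\item Each full twist creates two crossings of the same sign between \(K_i\) and \(K_j\). The edges belong to different components, so nothing cancels, and \(\lk(K_i,K_j)\) shifts by \(\pm1\).
\item No other component meets the band, so no other linking number changes.
\item Each of \(K_i\) and \(K_j\) is individually unchanged: once the other edge is forgotten, its helical arc unwinds inside a ball.
\end{itemize}
With this move, the exterior arc \(\alpha\), the ball \(B\), and all the dragging become unnecessary.
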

\begin{proof} 
    By addressing each pair \(i\neq j\) separately, we may assume without loss of generality that \(\ell_{ij}=\lk(K_i, K_j)\) for all but one pair of distinct indices~\(i,j\). 
    The surface \(S\) is constructed from~\(S_0\) by choosing a properly embedded arc \(\alpha\subset S\) connecting~\(K_i\) to~\(K_j\) and adding \(m \coloneq \ell_{ij}-\lk(K_i, K_j)\) full twists to a narrow strip around~\(\alpha\); this is illustrated in \Cref{fig:add_twist}. The resulting surface~\(S\) is diffeomorphic to~\(S_0\) (abstractly, \(S\)~differs from~\(S_0\) by the removal and reattachment of a band along the same pair of boundary arcs), 
    and the new knots \(K_i'\), \(K_j'\) satisfy \( \lk (K_i', K_j') = \lk(K_i,K_j) + m= \ell_{ij}\). The remaining linking numbers, as well as all other boundary components of~\(S_0\) are unchanged. Moreover, \(K_i'\)~is isotopic to~\(K_i\), and \(K_j'\) to~\(K_j\). \qedhere
    
    \begin{figure}[h]
    \centering
    \def\svgwidth{0.6\linewidth}
        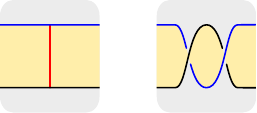
    \caption{Modifying~\(S_0\) to~\(S\) by adding \(m=1\) full twists to a strip around~\(\alpha\).}
    \label{fig:add_twist}
\end{figure}
\end{proof}

We obtain a version of \Cref{main_intro} for \(n\)-tuples of knots in~\(\Sph^3\), with additional control on linking numbers and framings.

\begin{cor}[Prescribing linking numbers and framings, \Cref{thm:linking_intro}]\label{full_linking_result}
    Let \(K_1, \dots, K_n\) be \(n \geq 3\) knots 
    in~\(\Sph^3\), and let \((\ell_{ij})_{i,j}\) be a symmetric \(n\times n\) matrix of integers. The following are equivalent:
    \begin{enumerate}
        \item For each \(i\in\{1, \dots, n\}\), we have
        \(\sum_{j=1}^n \ell_{ij}= 0\).

        \item There is a genus-zero link in \(\Sph^3\) with components \(K_1', \dots, K_n'\), such that for all \(i,j\in \{1, \dots, n\}\),
        \begin{enumerate}
            \item the knot \(K_i'\) is isotopic to~\(K_i\),
            \item if \(i \neq j \) then \(\lk(K_i', K_j') = \ell_{ij}\), and
            \item for some / every Seifert surface~\(S\) for~\(L\), we have \(\fr_S(K_i') = \ell_{ii}\). 
        \end{enumerate}
    \end{enumerate}
\end{cor}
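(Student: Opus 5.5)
The plan is to combine \Cref{main}, \Cref{adjust_linking} and \Cref{linking-framing-obstruction}; the latter will also justify the ``some / every'' wording in 2(c).

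For \((2\Rightarrow 1)\), let \(S\) be any Seifert surface for \(L = K_1'\cup\dots\cup K_n'\). By \Cref{linking-framing-obstruction},
\[\fr_S(K_i') = -\sum_{j\neq i}\lk(K_i',K_j').\]
The right-hand side does not depend on \(S\), so ``some'' and ``every'' in 2(c) are equivalent. Substituting \(\fr_S(K_i')=\ell_{ii}\) and \(\lk(K_i',K_j')=\ell_{ij}\) for \(j\neq i\) gives
\[\ell_{ii} = -\sum_{j\neq i}\ell_{ij},\]
which is exactly \(\sum_{j=1}^n \ell_{ij}=0\).

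For \((1\Rightarrow 2)\), first apply \Cref{main} with \(M=\Sph^3\). Here \(\pi_1(\Sph^3)\) is trivial, so every multiset of conjugacy classes is canceling. This yields a genus-zero surface \(S_0\) whose boundary components are isotopic to \(K_1,\dots,K_n\). Next apply \Cref{adjust_linking} to \(S_0\) with the off-diagonal entries \(\ell_{ij}\), \(i\neq j\), which are symmetric by hypothesis. The result is a surface \(S\) diffeomorphic to \(S_0\), hence connected of genus zero, with boundary components \(K_i'\) isotopic to \(K_i\) and \(\lk(K_i',K_j')=\ell_{ij}\) for all \(i\neq j\). Thus \(L=\partial S\) is a genus-zero link: its genus is at most \(0\), hence exactly \(0\). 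Finally, \Cref{linking-framing-obstruction} together with hypothesis 1 gives
\[\fr_S(K_i') = -\sum_{j\neq i}\ell_{ij} = \ell_{ii},\]
and by the independence noted above this holds for every Seifert surface of \(L\).

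There is no substantial obstacle, since every step is a direct invocation of earlier results. The one point to check is that the orientations on the boundary components produced by \Cref{adjust_linking} agree with those of the \(K_i\), so that the linking numbers carry the correct signs. This holds because adding full twists to a strip preserves the oriented surface structure, and hence the induced boundary orientations.
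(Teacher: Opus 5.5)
Your proposal is correct and follows the paper's proof: \((2\Rightarrow 1)\) comes from \Cref{linking-framing-obstruction}, and \((1\Rightarrow 2)\) comes from \Cref{main} followed by \Cref{adjust_linking}, with the framings then forced by \Cref{linking-framing-obstruction}. Your extra remarks are consistent with the paper. These are that \(\pi_1(\Sph^3)\) is trivial, that the framing is independent of \(S\) (which justifies ``some / every''), and that twisting preserves boundary orientations.
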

\begin{proof}
    The implication \((2\Rightarrow 1)\) is a direct consequence of \Cref{linking-framing-obstruction}.

    For the converse, we begin by applying \Cref{main} to produce an oriented genus-zero surface \(S_0\subset \Sph^3\) with \(n\)~boundary components isotopic to the given knots~\(K_i\). Using \Cref{adjust_linking}, we modify~\(S_0\) to a diffeomorphic surface~\(S\) whose boundary components~\(K_1', \ldots, K_n'\) have the same isotopy types, and additionally satisfy \(\lk(K_i', K_j') = \ell_{ij}\) for all distinct \(i,j\). In other words, \(S\)~satisfies Conditions~(a) and~(b). Condition~1 and \Cref{linking-framing-obstruction} ensure that Condition~(c) is then automatically fulfilled.
\end{proof}

\section{The case \(n=2\)}\label{sec:n=2}

For this section, let us again fix a connected smooth \(3\)-manifold~\(M\). The \textbf{\(4\)-genus} of a link~\(L\) in~\(M\) is the smallest genus of an oriented compact connected properly embedded surface~\(S\subset M \times [0,1]\) such that \(\partial S = L\times \{1\}\). Denoting the (usual) genus and the \(4\)-genus of~\(L\) by~\(\g(L)\) and \(\g_4(L)\), respectively, we have \(\g_4(L)\le\g(L)\). A knot~\(K\) in~\(M\) is called \textbf{slice} if \(\g_4(K) = 0\). 

\Cref{main_intro} does not hold for \(n=2\), as two knots can only bound an annulus if one is the reverse of the other. We will see that it is nonetheless possible to obtain a similar result with only two knots as input, and a two-component link of vanishing \(4\)-genus as output, see \Cref{fig:ribbon_annulus} for an example.

\begin{figure}[h!]
    \centering
    \def\svgwidth{0.3 \linewidth}
    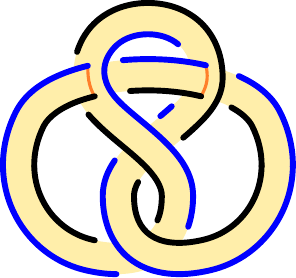
    \caption{A link of a trefoil (blue) and a figure-8 knot (black) in~\(\Sp3\) bounding an immersed annulus~\(A\). The  self-intersections are of ribbon-type, so they can be removed by perturbing \(A \subset \Sph^3\cong \Sp3\times\{1\}\) to be properly embedded in~\(\Sp3 \times [0,1]\).}
    \label{fig:ribbon_annulus}
\end{figure}

\begin{lem}[Removing a slice component]\label{remove-slice-component}
    Let \(L\subset M\) be a link with a component~\(K\) that is a slice knot. Then \[\g_4 (L \setminus K) \le \g(L).\]
\end{lem}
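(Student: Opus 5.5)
We take a minimal-genus Seifert surface \(S\subset M\) for~\(L\), of genus \(\g(L)\), and a slice disc for~\(K\). We then glue the disc onto \(S\) inside \(M\times[0,1]\). The result is a properly embedded surface bounding \(L\setminus K\) whose genus is still \(\g(L)\). It remains connected, since capping a boundary circle of a connected surface with a disc cannot disconnect it. Its Euler characteristic goes up by one, so the genus is unchanged. Here is the order of the steps.

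\emph{Step 1 (collar).} Push the interior of \(S\) into the collar. Choose a smooth function \(h\colon S\to (1/2,1]\) with \(h^{-1}(1)=\partial S\), and consider the graph \(\{(x,h(x)) : x\in S\}\subset M\times(1/2,1]\). Arrange \(h\) so this graph is transverse to \(M\times\{1\}\), that is, vertical near the boundary. This gives a properly embedded copy \(\widehat S\) of~\(S\) in \(M\times[1/2,1]\) with \(\partial \widehat S = L\times\{1\}\). It is embedded because it is a graph over an embedded surface.

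\emph{Step 2 (separate \(K\) from the rest).} Make \(h\) very close to \(1\) on a collar of \(K\) in~\(S\). Then the slice of \(\widehat S\) at some level \(t_0\in(1/2,1)\) near \(1\) consists of a parallel copy \(K\times\{t_0\}\), pushed slightly into~\(S\), together with nothing else near~\(K\). Alternatively, cut \(\widehat S\) along the curve \(\{h=t_0\}\) around \(K\). Either way, \(\widehat S\cap (M\times[t_0,1])\) contains a vertical annulus \(K''\times[t_0,1]\) near \(K\), where \(K''\subset S\) is a push-off of~\(K\), isotopic to~\(K\).

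\emph{Step 3 (insert the slice disc).} \(K''\) is isotopic to \(K\), hence slice. Take a properly embedded disc \(\Delta\subset M\times[0,1]\) with \(\partial\Delta = K''\times\{1\}\), and rescale it into \(M\times[0,t_0]\) so its boundary is \(K''\times\{t_0\}\). Delete the annulus \(K''\times[t_0,1]\) from \(\widehat S\) and replace it by the rescaled disc. The new surface \(F\) has boundary \((L\setminus K)\times\{1\}\) and is abstractly \(S\) with one boundary circle capped, so \(\g(F)=\g(S)=\g(L)\).

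\emph{Main obstacle.} The delicate point is embeddedness. The disc \(\Delta\) lives in \(M\times[0,t_0]\) and may wander far in the \(M\)-direction, so it could meet the rest of \(\widehat S\). The fix is to arrange the heights: keep the rest of \(\widehat S\) inside \(M\times(t_0,1]\) and put the disc entirely below level~\(t_0\). Concretely, choose \(h> t_0\) everywhere on \(S\) except along the annular collar where we cut. Then the only part of \(\widehat S\) reaching level \(t_0\) is the circle \(K''\times\{t_0\}\), and \(F\) is disjoint from itself except along that gluing circle.

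Finally, smooth the corner where the vertical annulus meets the disc, by standard collar reparametrisation. Then \(F\) is a smooth properly embedded surface, and \(\g_4(L\setminus K)\le\g(F)=\g(L)\).
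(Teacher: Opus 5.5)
Your proposal is correct and is essentially the paper's argument: keep the Seifert surface at or above a level \(t_0\), touching that level only along the circle to be capped, and put the rescaled slice disc strictly below that level, so that embeddedness follows from this separation in height. The paper does this more economically with \(\bigl(S_0\times\{\tfrac12\}\bigr)\cup D\cup\bigl((L\setminus K)\times[\tfrac12,1]\bigr)\), where \(D\subset M\times[0,\tfrac12]\). This makes the separation immediate and avoids both the tilted graph (which cannot literally be ``vertical'' near \(\partial S\), though transversality suffices) and cutting out a collar annulus.
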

\begin{proof}
    Start with a minimal-genus Seifert surface \(S_0 \subset M\) for~\(L\). Since \(K\)~is slice, there is a properly embedded disk \(D \subset M\times [0,\tfrac 12]\) with \(\partial D = K\times \{\tfrac 12\}\).  After smoothening corners, the surface
    \[ S \coloneq \bigl( S_0 \times \{\tfrac 12\}\bigr) \cup D \cup \bigl( (L\setminus K) \times [\tfrac 12, 1]\bigr) \quad\subset M\times [0,1]\]
    has the same genus as~\(S_0\), and \(\partial S = (L\setminus K) \times \{1\}\). Thus \(S\)~witnesses the claimed inequality.
\end{proof}

\begin{cor}[Links with vanishing \(4\)-genus of prescribed knots, \Cref{maincor:zero_4genus}]\label{cor:zero_4genus}
    For all knots~\(K_1\), \(K_2\subset M\), the following are equivalent:
    \begin{enumerate}
        \item \(K_1\)~is homotopic to~\(K_2\rev\).
        \item There is a two-component link~\(L\subset M\) with components isotopic to~\(K_1\),~\(K_2\), such that \(\g_4(L)=0\). 
    \end{enumerate} 
\end{cor}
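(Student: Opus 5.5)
For \((1\Rightarrow 2)\), I plan to add an unknot as a third, slice component, apply \Cref{main}, and then discard that component using \Cref{remove-slice-component}. Let \(U\subset M\) be an unknot, meaning it bounds an embedded disk in a small ball; it is slice and represents the trivial conjugacy class. Let \(C_1, C_2\) be the classes of \(K_1, K_2\). Since \(K_1\) is homotopic to \(K_2\rev\), we can choose \(g\in C_1\) with \(g^{-1}\in C_2\). Then \(g\cdot g^{-1}\cdot 1 = 1\), so \(C_1, C_2, \{1\}\) form a canceling multiset. \Cref{main} with \(n=3\) then gives a genus-zero link \(L'\) whose components are isotopic to \(K_1, K_2, U\).

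The component of \(L'\) isotopic to \(U\) is still slice, because sliceness is an isotopy invariant: an isotopy can be realized inside the collar \(M\times[\tfrac12,1]\). Applying \Cref{remove-slice-component} to that component, the remaining link \(L\) satisfies \(\g_4(L)\le \g(L') = 0\), and its components are isotopic to \(K_1, K_2\). This proves the first implication.

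For \((2\Rightarrow 1)\), suppose \(S\subset M\times[0,1]\) is a genus-zero properly embedded surface with \(\partial S = L\times\{1\}\). Then \(S\) is an annulus, since it is connected and has two boundary components. Composing with the projection \(M\times[0,1]\to M\) gives a continuous map from an annulus to \(M\) whose boundary loops are the two components of \(L\), with orientations induced from \(S\). \Cref{topological-cancellation} with \(n=2\) then says that their classes \(C_1, C_2\) cancel, so \(g_1g_2=1\) for some \(g_i\in C_i\). Hence \(C_2 = C_1^{-1}\), which means \(K_1\) is freely homotopic to \(K_2\rev\). Alternatively, the annulus itself provides a free homotopy between one boundary component and the reverse of the other.

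I expect no step to be a real obstacle. The points needing care are choosing the unknot's conjugacy class to be trivial, so that the canceling condition for three classes reduces to the hypothesis, and tracking orientation conventions in the \((2\Rightarrow 1)\) direction.
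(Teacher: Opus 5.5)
Your proposal is correct and matches the paper's proof. For \((1\Rightarrow 2)\) the paper likewise applies \Cref{main} to \(K_1\), \(K_2\) and an unknot, then discards the unknot via \Cref{remove-slice-component}; for \((2\Rightarrow 1)\) it invokes \Cref{topological-cancellation} together with the homotopy equivalence \(M\times\{1\}\hookrightarrow M\times[0,1]\), which amounts to your use of the projection. Your explicit checks that \(C_1, C_2, \{1\}\) is canceling and that sliceness is isotopy-invariant fill in details the paper leaves implicit.
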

\begin{proof}
    The implication \((2 \Rightarrow 1)\) is a consequence of~\Cref{topological-cancellation}, together with the fact that the map \(M \cong M\times \{1\}\hookrightarrow M\times [0,1]\) is a homotopy equivalence.

    For the converse, we use \Cref{main} to produce a genus-zero \(3\)-component link~\(L_0\) in~\(M\), with components isotopic to \(K_1\), \(K_2\), and an unknot~\(U\) (or any slice knot). By \Cref{remove-slice-component}, the link \(L \coloneq L_0 \setminus U\) satisfies
    \(\g_4(L) \le \g(L_0) =0\).
\end{proof}

\begin{rem}[Prescribing linking numbers in~\(\Sp3\)]
For $M=\Sp3$, \Cref{cor:zero_4genus} states that for any two knots there exists a link~\(L\) with \(4\)-genus zero whose components are isotopic to the given knots.
Applying \Cref{adjust_linking} to the Seifert surface for the link~\(L_0\) in the proof, one can additionally ensure that the linking number of the components of~\(L\) is any prescribed integer.
\end{rem}

Another approach to the \(n=2\) case is to relax the genus-zero restriction and ask about the minimal genus of a \(2\)-component link with components isotopic to two given knots.
This leads back to work of Taniyama--Yasuhara \cite{TY94}, who showed that this quantity defines a distance on the set of isotopy classes of knots in~\(\Sph^3\), called the \textbf{C-distance}. They proved that  the C-distance is bounded from below by the cobordism distance (between concordance classes), and from above by the Gordian distance. 

In the same spirit, one may abandon the orientability assumptions and ask for the maximal Euler characteristic of a non-orientable surface with \(n=2\) boundary components of prescribed knots \cite[Section~5]{TY94}. Even the \(n=1\) case has received considerable attention, under the name ``crosscap number'' \cite{Cla78, KaLe16, ItTa20}.

\printbibliography

\flushleft
---------

\textsc{Raphael Appenzeller},  \texttt{\href{mailto:rappenzeller@mathi.uni-heidelberg.de}{rappenzeller@mathi.uni-heidelberg.de}}

\textsc{José Pedro Quintanilha},   \texttt{\href{mailto:jquintanilha@mathi.uni-heidelberg.de}{jquintanilha@mathi.uni-heidelberg.de}}

\vspace{8pt}
Universität Heidelberg,
Institut für Mathematik,\\
Im Neuenheimer Feld 205, 
69120 Heidelberg, Germany.

\end{document}